\DeclareMathAlphabet{\mathbbmsl}{U}{bbm}{m}{sl}
\def\min{\mathop{\rm min}}
\def\min{\mathop{\rm min}}
\def\R{\mathbb{R}^n}
\def\x{{\bf{x}}}
\newtheorem{theorem}{Theorem}[section]
\newtheorem{corollary}[theorem]{Corollary}
\newtheorem{example}[theorem]{Example}
\newtheorem{remark}[theorem]{Remark}
\newtheorem{lemma}[theorem]{Lemma}
\newtheorem{definition}[theorem]{Definition}
\newtheorem{proposition}[theorem]{Proposition}
\begin{document}
\title{\large \textbf{Extended Horizontal Tensor Complementarity Problems}}
\author{Sonali Sharma$^{a,1}$, V. Vetrivel$^{a,2}$\\
{\small$^{a}$Department of Mathematics, IIT Madras, Chennai, India}\\
{\small $^{1}$Email: ma24r005@smail.iitm.ac.in}\\
{\small $^{2}$Email: vetri@iitm.ac.in}\\}
\date{}
\maketitle

\begin{abstract}
In this paper, we study the nonemptiness, compactness,  uniqueness, 
and finiteness  of the solution set of a new type of nonlinear complementarity problem, namely the extended horizontal tensor complementarity problem (EHTCP).  We introduce several classes of structured tensors and discuss the interconnections among these tensors. Consequently, we study the properties of the solution set of the EHTCP with the help of degree theory. 
\end{abstract}
\textbf{{Keywords:}} The Extended Horizontal Tensor Complementarity Problem, $\mathbb{EHP}$ tensor, $\mathbb{EHR}_{0}$ tensor, Degree Theory, Complementarity Problem.\\
Mathematics subject classification: 90C33, 90C30, 65H10. 

\section{Introduction}
For a given ordered set of $k+1~(k \geq 1)$ matrices $\widehat{{\bf A}}:=\{{A}_{0}, {A}_{1},...,{A}_{k}\} \subseteq {\mathbb R}^{n\times n}$, a vector $q \in \mathbb{R}^{n}$ and an ordered set of $k-1$ positive vectors $\widehat{\bf d} := \{d_{1},d_{2},...,d_{k-1}\} \subseteq \mathbb{R}^{n}$, \textit{the extended horizontal linear complementarity problem} (for short, EHLCP), is to find vectors $x_{0},x_{1},...,x_{k}$ in $\mathbb{R}^{n}$ such that
\begin{equation*}
{{A}_{0} x_{0}} =~  {q} + {\displaystyle{\sum_{j =1}^k} {A_{j}x_{j}}},
\end{equation*}
\begin{equation*}
 x_{0} \wedge x_{1} = 0 \text{ and } (d_{j} - x_{j}) \wedge x_{j+1} = 0, 1\leq j\leq k-1,
\end{equation*}
where $'\wedge'$ is a pointwise minimum map. This problem is denoted by EHLCP$(\widehat{\bf A},\widehat{\bf d},{q})$. The EHLCP and the properties of its solution set with respect to several structured matrices have been extensively studied (see, for instance \cite{MR2690912,MR1340716,yadav2023generalizations}). It is easy to see that if $k =1$, the EHLCP reduces to the horizontal linear complementarity problem (HLCP). Moreover, by taking $A_{0} = I$ (the identity matrix), the HLCP becomes the standard linear complementarity problem (LCP). Due to the widespread applications of LCP, several generalizations of the LCP have been studied in literature (see \cite{MR3896653,MR3396730,GOWDA199597,mezzadri2021,gowda1996extended}). 
For a brief overview of the theory, applications and various numerical methods for the LCP and HLCP, the reader is referred to \cite{eaves1981,mezzadri2019,guler1995,zhang1994}.

Among other generalizations of the LCP, the tensor complementarity problem (TCP) has gained considerable interest in literature. For a given real tensor $\mathcal{A}$ of order $m$ and dimension $n$, and a given vector ${q} \in \mathbb{R}^{n}$, the TCP$(\mathcal{A},{q})$ \cite{MR3341670} is to find a vector $x$ in $\mathbb{R}^{n}$ such that
\begin{equation*}\label{E2}
{w} = \mathcal{A}{x}^{m-1}+q \in \mathbb{R}^{n} ~\text{and}~{x} \wedge {w} =0,
\end{equation*}  
where $\mathcal{A}{x}^{m-1}$ is a vector in $\mathbb{R}^{n}$, defined as 
\begin{equation*}\label{E3}
(\mathcal{A}{x}^{m-1})_{i} = \displaystyle{\sum_{i_{2},...,i_{m} =1}^{n} a_{ii_{2}...i_{m}}x_{i_{2}}...x_{i_{m}}},  
\end{equation*}
and $a_{i_{1}i_{2}...i_{m}} \in \mathbb{R}$ are the entries of $\mathcal{A}$ with $i_{1},i_{2},...,i_{m} \in \{1,2,...,n\}.$ The set of all $m$th order $n$ dimensional real tensors is denoted by $\mathbb{T}(m,n)$. The TCP has numerous applications in optimization, game theory, and absolute value equations. For instance, Huang et al. \cite{MR3613565} formulated a class of  $n$-person noncooperative game as a TCP, and a semismooth Newton method for finding the Nash equilibrium of this game was provided in \cite{chen2019finding}. For a detailed discussion on the theory, solution methods and applications of the TCP, one can see \cite{MR3989294,MR4023437,MR3998357} and references therein. Motivated by these applications of TCP, several generalizations of TCP  have been studied in recent years. One of these generalizations of TCP is the horizontal tensor complementarity problem (HTCP), which  was recently introduced by the authors \cite{Yadav30082024}, where the solution set properties of the HTCP were studied and an iterative method was proposed in \cite{Sun2024}.  Another generalization of TCP, known as the extended vertical tensor complementarity problem (EVTCP) was introduced in \cite{MR4044418}, and the bounds of the solution set of the EVTCP \cite{li2024global,wang2025bound} and the  properties of the solution set of the EVTCP with the help of some special structured tensors have been discussed \cite{li2024,li2025extended,meng2023existence}. Moreover, an application of the VTCP was studied in game theory, where the authors \cite {jia2024} provided the connection between the generalized multilinear games and the VTCP. 

It has been shown that various structured tensors play an important role in the study of the properties of the solution set of the TCP. For example, $R_{0}$ tensor \cite{MR3513267} and $P$ tensor \cite{MR3513266} give the compactness of the solution set, strictly semipositive tensor \cite{MR3501398} implies the existence of  solution for all $q$ and the uniqueness of the solution  when the vector $q$ is non-negative, the notion of non-degenerate tensors \cite{MR4310678} has been studied with regards to the finiteness of the solution set, strong $P$ tensor \cite{MR3513266} gives the uniqueness of the solution, etc. In order to study the properties of the solution set of the EVTCP, HTCP, and polynomial complementarity problem (PCP), the concept of these structured tensors has been extended and the properties of the corresponding solution set have been analysed  with the help of these structured tensors (see \cite{MR4044418,li2024,li2025extended,meng2023existence,Yadav30082024,
shang2023structured,Sun2024}). 

Motivated by the above generalizations of the TCP, in this paper, we introduce the {\it extended horizontal tensor complementarity problem} (EHTCP). For a given ordered set of $k +1~(k \geq 1)$ tensors $\widehat{\mathcal{A}} := \{\mathcal{A}_{0}, \mathcal{A}_{1},...,\mathcal{A}_{k}\}$ in $\mathbb{T}(m,n)$, a vector $q \in \mathbb{R}^{n}$ and an ordered set of $k -1$ positive vectors  $\widehat{\bf d} := \{d_{1},d_{2},...,d_{k-1}\} \subseteq \mathbb{R}^{n}$, ${\rm EHTCP}(\widehat{\mathcal{A}}, \widehat{\bf{d}},{q})$, is to find vectors $x_{0}, x_{1},...,x_{k}$ in $\mathbb{R}^{n}$ such that
\begin{equation*}
\mathcal{A}_{0} {x_{0}^{m-1}} = {q} + {\sum_{j = 1}^{k} \mathcal{A}_{j}{x_{j}^{m-1}}}, 
\end{equation*}
\begin{equation*}
{x_{0}} \wedge {x_{1}} = 0~\text{and}~({d_{j}}-{x_{j}}) \wedge {x_{j+1}} = 0,~j \in \{1,2,...,k-1\}. 
\end{equation*}
It is easy to observe that when $k =1$, the EHTCP reduces to the HTCP,
 which was recently studied in \cite{Yadav30082024,Sun2024}.
  In addition, if ${\mathcal A}_{0} = {\mathcal I}$ (the identity tensor), then EHTCP reduces to TCP \cite{MR3791481}. When $m=2$,  the EHTCP becomes the extended horizontal linear complementarity problem (EHLCP) studied in \cite{MR1340716, yadav2023generalizations}. The EHLCP further reduces to the horizontal linear complementarity problem (HLCP) \cite{zhang1994} and the standard linear complementarity problem \cite{MR3396730}. The relationship among these problems is illustrated as follows.
\begin{equation*}
\begin{tikzcd}[arrows=Rightarrow, row sep=huge]
\text{\bf {EHTCP}} \arrow[r,"k =1"] \arrow[d,swap,"m=2"] &
\text{HTCP} \arrow[r,"\mathcal{A}_{0} = \mathcal{I}"] \arrow[d,swap,"m=2"]  &
\text{TCP} \arrow[d,"m=2"]
\\
\text{EHLCP} \arrow[r,"k =1"] & \text{HLCP}  \arrow[r,"{A}_{0} = {I}"] & \text{LCP}
\end{tikzcd}
\end{equation*} 
As EHTCP serves as a unifying framework for various complementarity problems, we are motivated to investigate the properties of its solution set. To this end, we introduce several classes of structured tensors and employ degree-theoretic tools to analyse the characteristics of the  solution set of the EHTCP. 
In this paper, we consider the following problems related to the EHTCP.
\begin{enumerate}
\item[\rm(i)] In HTCP, the $R_{0}$ tensor pair gives the compactness of the solution set  (see \cite[Theorem 3.3]{Yadav30082024}). 
This motivates us to generalize the concept of $R_{0}$ tensor pair to the EHTCP and study the compactness of the corresponding solution set. 
\item[\rm(ii)] The $P$ tensor pair and the strong $P$ tensor pair provide the existence, compactness and uniqueness of the solution of the HTCP, respectively (see \cite[Theorems 3.18 and 3.23]{Yadav30082024}). 
Motivated by this, one can ask, whether  these concepts can be extended to the EHTCP or not? If so, then can we expect the same kind of outcome in the EHTCP?
\item[\rm(iii)]  
The finiteness of the solution set of the TCP was investigated in \cite{MR4310678} for a class of non-degenerate tensors. The authors showed that a tensor being non-degenerate is not equivalent to the finiteness of the solution set of the corresponding TCP. They also defined non-degenerate property and provided the finiteness of the solution set of the TCP when the involved tensor has this property. However, this result has not yet been explored in the context of HTCP. This gap motivates us to extend the notion of non-degenerate tensors to the EHTCP, and to study the finiteness of the corresponding solution set. 
\end{enumerate}
In order to investigate the aforementioned problems, we extend  the concepts of the  ${R}_{0}$ tensor pair, $P$ tensor pair, and strong $P$ tensor pair from the HTCP to the EHTCP. We refer to these generalizations as $\mathbb{EHR}_{0}$ tensors, $\mathbb{EHP}$ tensors and strong $\mathbb{EHP}$ tensors, respectively. Utilizing these structured tensors and degree theory, we study the nonemptiness, uniqueness and compactness of the solution set of the EHTCP. To address the third problem, we  introduce the notions of  $\mathbb{EHND}$ tensors and strong $\mathbb{EHND}$ tensors, and subsequently investigate the finiteness of the EHTCP solution set in relation to these tensor classes. In particular,  we establish that the solution set of the EHTCP is finite when the involved set of tensors is a strong $\mathbb{EHND}$ tensor.  

The outline of  this paper is as follows: In Section \ref{Section 2}, we introduce some basic notation, definitions and results that will be useful in the sequel. The EHTCP is defined in Section \ref{Section 3}, and  several structured tensors related to the EHTCP are introduced and the interconnections among them are discussed in Subsection \ref{Subsection 3.1}. In Section \ref{Section 4}, we explore the properties of the solution set of the EHTCP.  The degree of EHTCP is defined in Subsection \ref{Degree of EHTCP}, existence results are obtained in Subsection \ref{Existence and Compactness}, and  finiteness and uniqueness results are discussed in Subsection \ref{Finiteness and Uniqueness}. In Section \ref{section 5} we draw a conclusion of our work. 

\section{Preliminaries}\label{Section 2}

Throughout this paper, we will use the following notation:
\begin{enumerate}
\item[\rm(i)] For a natural number $k$, the set $\{1,2,...,k\}$ is denoted by $[k]$. 
\item[\rm(ii)] The $n$-dimensional Euclidean space with the usual inner product is denoted by $\mathbb{R}^{n}.$ The collection of all the vectors ${x}$ in $\mathbb{R}^{n}$ such that $x_{i} \geq 0~ (> 0)$, for all $i \in [n]$ is denoted by $\mathbb{R}^{n}_{+} (\mathbb{R}^{n}_{++}).$ 
\item[\rm(iii)] ${x} \wedge {y}$ denotes the vector $\min\{x,y\}$ having its $i$th component as $\min\{{x_{i}, y_{i}}\},$ and ${x} \ast {y}$ denotes the Hadamard (= componentwise) product of $x$ and $y$. For any ${x} \in \mathbb{R}^{n}$ and a positive integer $m$,  ${x}^{[m]}$ denotes a vector in $\mathbb{R}^{n}$ with its $i$th component as $x^{m}_{i}$. 
\item[\rm(iv)]  The set $\mathbb{T}(m,n)$ denotes the set of all real tensors of order $m$ and dimension $n$, and the elements of $\mathbb{T}(m,n)$ are denoted by math calligraphic letters, such as $\mathcal{A}_{0}, \mathcal{A}_{1},..$ and ${\mathcal I}$ denotes  the identity tensor. The collection of all real $n \times n$ matrices is denoted by $\mathbb{R}^{n \times n}$ and $A,B,...$ are used to denote the elements in $\mathbb{R}^{n \times n}$. 
\item[\rm(v)] The $k$-ary Cartesian power of ${\mathbb R}^{n}$ is denoted by $\Theta^{(k)}_n.$ The bold zero `${\bf 0}$' denotes a vector in $\Theta^{(k)}_{n}$ such that ${\bf 0} = (0,0,...,0) \in \Theta^{(k)}_{n}.$ The $k$-ary Cartesian power of ${\mathbb R}^{n}_{++}$  is denoted by $\Theta^{(k)}_{n,++}.$ The set $\Theta^{(k)}_{(m,n)}$ denotes the $k$-ary Cartesian power of the set $\mathbb{T}(m,n)$.
\end{enumerate}

We recall the following property of the vector $\min\{x,y\}$, for any $x,y$ in $\mathbb{R}^{n}.$ 
\begin{proposition}\label{pp1}{\rm \cite[Proposition 1]{MR3896653}}
Let $x,y,z \in \mathbb{R}^{n}$. The following statements  are valid.
\begin{enumerate}
\item[\rm(a)] $z + (x \wedge y) = (z+x) \wedge (z+y),$
\item[\rm(b)] Consider the following statements: 
\begin{enumerate}
\item[\rm(i)] ${x} \wedge {y} = 0,$
\item[\rm(ii)] ${x} \geq 0, {y} \geq {0},~\text{and}~{x} \ast {y} = 0,$
\item[\rm(iii)] ${x} \geq 0, {y} \geq {0},~\text{and}~\langle {x}, {y} \rangle = 0.$
\end{enumerate}
Then, $(i) \iff (ii) \iff (iii).$
\end{enumerate}
\end{proposition}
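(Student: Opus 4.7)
The plan is to reduce both parts of the proposition to elementary scalar facts by arguing componentwise. For part (a), I would fix a coordinate $i \in [n]$ and invoke the translation invariance of $\min$ on $\mathbb{R}$: adding a common quantity $z_i$ to both arguments of $\min\{x_i, y_i\}$ simply shifts the minimum by $z_i$, so that $z_i + \min\{x_i, y_i\} = \min\{z_i + x_i, z_i + y_i\}$. Assembling these coordinatewise equalities yields the claimed vector identity.

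For part (b), I intend to prove the circular chain $(\mathrm{i}) \Rightarrow (\mathrm{ii}) \Rightarrow (\mathrm{iii}) \Rightarrow (\mathrm{ii}) \Rightarrow (\mathrm{i})$, again reducing to scalar arguments one coordinate at a time. The step $(\mathrm{i}) \Rightarrow (\mathrm{ii})$ relies on the fact that $\min\{x_i, y_i\} = 0$ forces both $x_i \geq 0$ and $y_i \geq 0$, and at least one of them to vanish, which simultaneously gives nonnegativity and $x_i y_i = 0$. The implication $(\mathrm{ii}) \Rightarrow (\mathrm{iii})$ is immediate from $\langle x, y \rangle = \sum_i x_i y_i$. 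For $(\mathrm{iii}) \Rightarrow (\mathrm{ii})$, the key observation is that under $x, y \geq 0$ every summand $x_i y_i$ is nonnegative, so the sum vanishing forces each term to vanish. Finally, $(\mathrm{ii}) \Rightarrow (\mathrm{i})$ follows because $x_i y_i = 0$ together with $x_i, y_i \geq 0$ means at least one of the two is zero, so $\min\{x_i, y_i\} = 0$.

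I do not anticipate any genuine obstacle: the proposition is a routine componentwise translation of elementary real-variable facts. The only care required is to make the reduction to scalar arguments explicit and to note exactly where the nonnegativity hypothesis is used, particularly in the step $(\mathrm{iii}) \Rightarrow (\mathrm{ii})$ where it ensures that a vanishing sum of products forces every individual product to vanish.
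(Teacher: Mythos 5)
Your proposal is correct and complete: the coordinatewise reduction for part (a) via translation invariance of the scalar $\min$, and the chain of implications for part (b) using nonnegativity to pass between a vanishing Hadamard product and a vanishing inner product, are exactly the standard arguments. The paper itself supplies no proof of this proposition --- it is quoted as \cite[Proposition 1]{MR3896653} --- so there is nothing to compare against; your elementary componentwise verification is the expected one and fills the gap correctly, with the nonnegativity hypothesis invoked precisely where it is needed (namely in $(\mathrm{iii})\Rightarrow(\mathrm{ii})$, where a sum of nonnegative terms vanishing forces each term to vanish).
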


\begin{definition}\rm \cite{MR3341670,MR3821072,MR4310678}
A tensor $\mathcal{A} \in \mathbb{T}(m,n)$ is said to be a/an
\begin{enumerate}
\item[\rm(i)] $R_{0}$ tensor if ${x} \wedge {\mathcal{A}x^{m-1}} = 0 \implies {x} = 0$.
\item[\rm(ii)] $P$ tensor if for any nonzero $x \in \mathbb{R}^{n}$, there exists $k \in [n]$ such that $$x_{k}(\mathcal{A}{x^{m-1}})_{k} > 0.$$
\item[\rm(iii)] strictly semipositive tensor if for any nonzero ${x} \in \mathbb{R}^{n}_{+}$ there exists $k \in [n]$ such that $$x_{k}> 0~\text{and}~(\mathcal{A}{x^{m-1}})_{k} > 0.$$ 
\item[\rm(iv)] non-degenerate tensor if ${x} \ast \mathcal{A}{x}^{m-1} = 0 \implies {x} = {0}$.
\end{enumerate} 
\end{definition}

\subsection{Degree Theory}
In this paper, we employ  degree theoretic tools to establish the existence results for the $\rm{EHTCP}$. All the necessary results concerning the degree theory are given in \cite{MR1955649} (particularly Theorem 2.1.2 and Proposition 2.1.3); see also, \cite{MR4576574,MR2222819}. Here is a short review. Let  $U$ be a nonempty, bounded, open subset of ${\R}$, and let ${\bar{U}}$, $\partial{U}$ denote the closure and boundary of $U$, respectively. Let $\Phi : \bar{U} \to {\R}$ be a continuous function and ${\bf u} \notin \Phi(\partial{U})$. Then the degree of $\Phi$ over $U$ with respect to ${\bf u}$ is defined. It is denoted by $\deg(\Phi,U,{\bf u})$ and it is always an integer. If $\deg(\Phi,U,{\bf u}) \neq 0$, then $\Phi({\bf w}) = {\bf u}$ has a solution in $U$. Suppose that $\Phi({\bf w}) = {\bf u}$ has a unique solution, say ${\bf w}^{\ast} \in U.$ Then $\deg(\Phi,U,{\bf u})$ is invariant for any bounded open set $U^{'}$ containing ${\bf w}^{\ast}$ and contained in $U$, and we denote $\deg(\Phi,U^{'},{\bf w})$ as $\deg(\Phi,{\bf w})$. The following properties hold for the $\deg(\Phi,U,{\bf u})$. 
\begin{enumerate}
\item[\rm(i)] $\deg({\rm I},U,{\bf u}) = 1$ if ${\bf u} \in U$, where $\rm I$ denotes the identity function.
\item[\rm(ii)] {\bf (Nearness Property)}. Let $\Psi : {\R} \to {\R}$ be a continuous function such that $\sup\{\|\Phi({\bf w})-\Psi({\bf w})\|_{\infty}: {\bf w} \in \bar{U}\}$ is sufficiently small. Then $\deg(\Phi,U,{\bf u}) = \deg(\Psi,U,{\bf u})$, where $\|\cdot\|_{\infty}$ denotes the max-norm of vectors in ${\R}$.
\item[\rm(iii)] {\bf (Homotopy Invariance Property)}. 
Let $\mathcal{Z}({\bf w},\theta) : {\R} \times [0,1] \to {\R}$ be a homotopy. The set $\Delta = \{{\bf w}\in {\R} : \mathcal{Z}({\bf w},\theta) = {0}~\text{for some}~0\leq\theta\leq1\}$ is said to be the set of zeros for $\mathcal{Z}$. Suppose that $\Delta$ is bounded. Let $S$ be a bounded set such that $\Delta \subseteq S$, then we have 
$$\deg(\mathcal{Z}({\cdot},0), S,{0})= \deg(\mathcal{Z}({\cdot},1), S,{ 0}).$$
\end{enumerate}
We note that all the degree theoretic results and concepts are also applicable over any finite dimensional Hilbert space. 

\section{The Extended Horizontal Tensor Complementarity Problem}\label{Section 3}

In this section, we formally introduce the extended horizontal tensor complementarity problem (EHTCP), and then define several structured tensor classes related to the EHTCP.
\begin{definition}\rm 
Let $\widehat{\mathcal{A}} = (\mathcal{A}_{0}, \mathcal{A}_{1},...,\mathcal{A}_{k}) \in \Theta^{(k+1)}_{(m,n)},$~ $\widehat{{\bf d}} = (d_{1},d_{2},...,d_{k-1}) \in \Theta^{(k-1)}_{n,++}$, and $q \in \mathbb{R}^{n}$, where $k \geq 1$. The \textit{extended horizontal tensor complementarity problem} (for short, EHTCP), denoted by ${\rm EHTCP}(\widehat{\mathcal{A}}, \widehat{\bf{d}},{q})$, is to find vectors $x_{0}, x_{1},...,x_{k}$ in $\mathbb{R}^{n}$ such that
\begin{equation}\label{EHTCP1}
\mathcal{A}_{0} {x_{0}^{m-1}} = {q} + {\sum_{j = 1}^{k} \mathcal{A}_{j}{x_{j}^{m-1}}}, 
\end{equation}
\begin{equation}\label{EHTCP2}
{x_{0}} \wedge {x_{1}} = 0~\text{and}~({d_{j}}-{x_{j}}) \wedge {x_{j+1}} = 0,~j \in [k-1]. 
\end{equation}
\end{definition}
The set of all the vectors ${\bf x} = (x_{0},x_{1},...,x_{k}) \in \Theta^{(k+1)}_{n}$ satisfying Eqs. (\ref{EHTCP1}) 
 and (\ref{EHTCP2}) is said to be the solution set of the  ${\rm EHTCP}(\widehat{\mathcal{A}}, \widehat{\bf{d}},{q})$, and is denoted by ${\rm SOL}(\widehat{\mathcal{A}}, \widehat{\bf{d}},{q})$.
The following observation is immediate for ${\rm SOL}(\widehat{\mathcal{A}}, \widehat{\bf{d}},{q})$ and will be useful in the sequel. For the sake of completeness, we provide a proof here (see also \cite[Lemma 3.1]{yadav2023generalizations}).   
\begin{lemma}\rm\label{solution}
Let $\widehat{\mathcal{A}} = (\mathcal{A}_{0}, \mathcal{A}_{1},...,\mathcal{A}_{k}) \in \Theta^{(k+1)}_{(m,n)}.$ If ${\bf x} = (x_{0},x_{1},...,x_{k}) \in {\rm SOL}(\widehat{\mathcal{A}}, \widehat{\bf{d}},{q})$, then ${\bf x}$ satisfies 
$$\mathcal{A}_{0}x_{0}^{m-1} = {q} + {\sum_{j =1}^{k} \mathcal{A}_{j}x_{j}^{m-1}}~\text{and}~x_{0} \wedge x_{i} = 0~\forall~i \in [k].$$
\end{lemma}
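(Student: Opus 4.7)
The first identity in the conclusion is nothing more than the restatement of equation (\ref{EHTCP1}) in the definition of the EHTCP, so no work is required there. The real content is the claim $x_0 \wedge x_i = 0$ for every $i \in [k]$, and the plan is to prove it by induction on $i$, exploiting the strict positivity of the vectors $d_j$ via Proposition \ref{pp1}(b).

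For the base case $i=1$, the conclusion is precisely the first half of equation (\ref{EHTCP2}), so nothing to do. For the inductive step, suppose $1 \le i \le k-1$ and $x_0 \wedge x_i = 0$. By Proposition \ref{pp1}(b) applied to this equality, we get $x_0, x_i \ge 0$ with $x_0 \ast x_i = 0$. From $(d_i - x_i) \wedge x_{i+1} = 0$ (which is in (\ref{EHTCP2}) since $i \in [k-1]$) and the same Proposition, we get $x_{i+1} \ge 0$, $d_i - x_i \ge 0$, and $(d_i - x_i) \ast x_{i+1} = 0$.

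Now I would argue componentwise to bootstrap from $x_0 \ast x_i = 0$ to $x_0 \ast x_{i+1} = 0$. Fix an index $\ell \in [n]$ with $x_0(\ell) > 0$. The induction hypothesis forces $x_i(\ell) = 0$, and then the strict positivity of $d_i$ (since $d_i \in \mathbb{R}^n_{++}$) gives $(d_i - x_i)(\ell) = d_i(\ell) > 0$. Combined with $(d_i - x_i)(\ell)\, x_{i+1}(\ell) = 0$, this yields $x_{i+1}(\ell) = 0$. Therefore $x_0 \ast x_{i+1} = 0$, and since both are nonnegative, one more application of Proposition \ref{pp1}(b) (in the reverse direction) delivers $x_0 \wedge x_{i+1} = 0$, completing the induction.

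There is no real obstacle here: the only subtlety is remembering that the hypothesis $\widehat{\mathbf{d}} \in \Theta^{(k-1)}_{n,++}$ is exactly what lets the chain propagate, because a coordinate where $x_i$ vanishes automatically produces a \emph{strict} positivity in $d_i - x_i$, which then annihilates $x_{i+1}$ on that coordinate. Without the strict positivity of $d_i$ the induction would fail.
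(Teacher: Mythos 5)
Your proof is correct and follows essentially the same route as the paper: both arguments reduce to the componentwise observation that a coordinate where $x_0$ is positive forces $x_i$ to vanish there, whence the strict positivity of $d_i$ makes $(d_i-x_i)$ strictly positive in that coordinate and kills $x_{i+1}$ via Proposition \ref{pp1}(b). The only difference is cosmetic: the paper argues by contradiction for the case $i=2$ and appeals to ``similarly'' for the rest, whereas you package the propagation as an explicit induction, which is arguably cleaner.
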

\begin{proof}
Let ${\bf x} = (x_{0},x_{1},...,x_{k}) \in {\rm SOL}(\widehat{\mathcal{A}}, \widehat{\bf{d}},{q}).$ This gives
\begin{equation*}\label{EH123}
\mathcal{A}_{0} {x_{0}^{m-1}} = {q} + {\sum_{j = 1}^{k} \mathcal{A}_{j}{x_{j}^{m-1}}}, 
\end{equation*}
\begin{equation}\label{EH124}
{x_{0}} \wedge {x_{1}} = 0~\text{and}~({d_{j}}-{x_{j}}) \wedge {x_{j+1}} = 0,~j \in [k-1]. 
\end{equation} 
From Eq. (\ref{EH124}) and Proposition \ref{pp1}, we get $x_{i} \geq {0}$ for all $i \in [k] \cup \{0\}.$ Now, it is enough to prove that $x_{0} \ast x_{2} = 0.$ Assume on the contrary that there exists an index $i \in [n]$ such that $({x_0})_{i}(x_{2})_{i} \neq 0.$ 
From Eq. (\ref{EH124}), we get $(x_{1})_{i} =0$ and $(d_{1}-x_{1})_{i} (x_{2})_{i} = 0.$ As $(x_{1})_{i} =0$ and  $({x_2})_{i} \neq 0$, we get $(d_{1})_{i} = 0$ leading to a contradiction as $d_{1} \in \mathbb{R}^{n}_{++}.$ Hence $x_{0} \ast x_{2} = 0.$ Similarly  $x_{0} \ast x_{i} = 0$ for all $i \in [k]$ and our conclusion follows from Proposition \ref{pp1}.  
\end{proof}

\subsection{Special Structured Tensors Related to the EHTCP}\label{Subsection 3.1}
In the following, we introduce some special structured tensors related to the EHTCP and  analyse the interconnections among these tensors.

\begin{definition}\rm\label{special tensors}
Let $\widehat{\mathcal{A}} = (\mathcal{A}_{0}, \mathcal{A}_{1},...,\mathcal{A}_{k}) \in \Theta^{(k+1)}_{(m,n)}.$ We say that $\widehat{\mathcal{A}}$ is a/an  
\begin{enumerate}
\item \label{R_0 tensor}  extended horizontal ${R}_{0}$ ($\mathbb{EHR}_{0}$) tensor if 
\begin{equation*}
\left.
\begin{aligned}
\mathcal{A}_{0}x_{0}^{m-1} = {\sum_{j =1}^{k} \mathcal{A}_{j}x_{j}^{m-1}}\\
~x_{0} \wedge x_{i} = 0~\forall~i\in[k]
\end{aligned}
\right\}
\implies x_i = 0,~\forall i \in [k] \cup \{0\}.
\end{equation*}
\item\label{P} extended horizontal ${P}$ ($\mathbb{EHP}$) tensor if 
\begin{equation*}
\left.
\begin{aligned}
\mathcal{A}_{0}x_{0}^{m-1} = {\sum_{j =1}^k \mathcal{A}_{j}x_{j}^{m-1}}
\\
x_{0} \ast x_{i} \leq {0}~\forall~i \in [k] 
\end{aligned}
\right\}
\implies x_i = 0,~\forall i \in [k] \cup \{0\}.
\end{equation*}
\item \label{strong P}  strong  extended horizontal ${P}$ (strong $\mathbb{EHP}$) tensor if for $x_{i}$, $\overline{x}_{i} \in \mathbb{R}^{n}$ where $0 \leq i \leq k,$
\begin{equation*}
\left.
\begin{aligned}
(\mathcal{A}_{0}x_{0}^{m-1}- \mathcal{A}_{0}\overline{x}_0^{m-1}) = {\sum_{j =1}^k (\mathcal{A}_{j}x_j^{m-1}}-\mathcal{A}_{j}\overline{x}_j^{m-1})\\
(x_{0}- \overline{x}_{0}) \ast (x_{i}-\overline{x}_{i}) \leq {0}~\forall~i \in [k]
\end{aligned}
\right\}
\left.
\implies
\begin{aligned} 
 x_{i} = \overline{x}_{i},~\forall i \in [k] \cup \{0\}. 
 \end{aligned}
 \right.
\end{equation*}
\item \label{E} extended horizontal ${E}$ ($\mathbb{EHE}$) tensor if 
\begin{equation*}
\left.
\begin{aligned}
\mathcal{A}_{0}x_{0}^{m-1} = {\sum_{j =1}^k \mathcal{A}_{j}x_{j}^{m-1}}\\
~x_{i} \geq {0}~\forall~i \in [k]\\
x_{0} \ast x_{i} \leq {0}~\forall~i \in [k]
\end{aligned}
\right\}
\implies x_i = 0,~\forall i \in [k] \cup \{0\}.
\end{equation*}
\item \label{ND}  extended horizontal non-degenerate ($\mathbb{EHND}$) tensor if 
\begin{equation*}
\left.
\begin{aligned}
    \mathcal{A}_{0}x_{0}^{m-1} = {\sum_{j =1}^k \mathcal{A}_{j}x_{j}^{m-1}}\\
    x_{i} \ast x_{j} = {0}~\forall~0\leq i < j \leq k
\end{aligned}
\right\}
\implies x_i = 0,~\forall i \in [k] \cup \{0\}.
\end{equation*}
\item \label{strong ND} strong extended horizontal non-degenerate (strong $\mathbb{EHND}$) tensor if for  $x_{i}$, $\overline{x}_{i} \in \mathbb{R}^{n}$ where $0 \leq i \leq k,$
\begin{equation*}
\left.
\begin{aligned}
(\mathcal{A}_{0}x_{0}^{m-1}- \mathcal{A}_{0}\overline{x}_0^{m-1}) = {\sum_{j =1}^k (\mathcal{A}_{j}x_j^{m-1}}-\mathcal{A}_{j}\overline{x}_j^{m-1})\\
(x_{i}- \overline{x}_{i}) \ast (x_{j}-\overline{x}_{j}) = {0}~\forall~0\leq i < j \leq k
\end{aligned}
\right\}
\left.
\implies
\begin{aligned} 
 x_{i} = \overline{x}_{i},~\forall i \in [k] \cup \{0\}. 
 \end{aligned}
 \right.
\end{equation*}
\end{enumerate}
\end{definition}

\begin{remark}\rm\label{notation for convenience}
We use the same notation to denote the class of  $\widehat{\mathcal{A}} \in \Theta^{(k+1)}_{(m,n)}$ having a certain property. 
For example, $\mathbb{EHR}_{0}$ denotes the set of all $\widehat{\mathcal{A}} \in \Theta^{(k+1)}_{(m,n)}$  being an $\mathbb{EHR}_{0}$ tensor.  Observe that, in Definition \ref{special tensors}, for $k =1,$ the $\mathbb{EHR}_{0}$ tensor, $\mathbb{EHP}$ tensor, and strong $\mathbb{EHP}$ tensor reduce to ${R}_{0}$ tensor pair \cite[Definition 3.2]{Yadav30082024}, $P$ tensor pair \cite[Definition 3.12]{Yadav30082024}, and strong $P$ tensor pair \cite[Definition 3.20]{Yadav30082024} respectively.    
\end{remark}

\begin{remark}\rm\label{HTCP}
Note that if $\mathcal{A}_{0} = \mathcal{I}, k =1$ and $\widehat{\mathcal{A}} = (\mathcal{I}, \mathcal{A}_{1}) \in \Theta ^{(2)}_{(m,n)}$, then $\widehat{\mathcal{A}}$ being an $\mathbb{EHR}_{0}$ tensor corresponds to $\mathcal{A}_{1}$ being an ${R}_{0}$ tensor \cite{MR3821072}. Similarly $\mathbb{EHP}$ tensor, and strong $\mathbb{EHP}$ tensor correspond to $\mathcal{A}_{1}$ being a $P$ tensor \cite{MR3341670}, and strong $P$ tensor \cite{MR3513266} respectively. Also, $\mathbb{EHE}$ tensor, and (strong $\mathbb{EHND}$) $\mathbb{EHND}$ tensor correspond to $\mathcal{A}_{1}$ being a strictly semipositive tensor \cite{MR3501398}, and (non-degenerate property) non-degenerate tensor \cite{MR4310678} respectively.   
\end{remark}

In what follows, we provide some examples to illustrate the above definition.
\begin{example}\rm\label{EHR11}
Let $\widehat{\mathcal A} = (\mathcal{A}_{0},\mathcal{A}_{1},\mathcal{A}_{2})$ be in $\Theta^{(3)}_{(3,2)}$ such that
$\mathcal{A}_{0} = (a^{0}_{ijk}) \in \mathbb{T}(3,2)$ where $a^{0}_{122}= 1, a^{0}_{211} = 1$ and all other entries are zero, $\mathcal{A}_{1} = (a^{1}_{ijk}) \in \mathbb{T}(3,2)$ with $a^{1}_{122} = -1, a^{1}_{211} = 1$  and all other entries are zero, and $\mathcal{A}_{2} = (a^{2}_{ijk}) \in \mathbb{T}(3,2)$ with $a^{2}_{122} = -1, a^{2}_{211} = 1$  and all other entries are zero. We show that (i) $\widehat{\mathcal{A}} \in \Theta^{(3)}_{(3,2)}$ is an $\mathbb{EHR}_{0}$ tensor and $\mathbb{EHND}$ tensor, (ii) but none of the tensors $\mathcal{A}_{0}, \mathcal{A}_{1}, \mathcal{A}_{2}$ are  $R_{0}$ tensors, and (iii) $\widehat{\mathcal{A}} \in \Theta^{(3)}_{(3,2)}$ is not an $\mathbb{EHE}$ tensor. For any ${\bf w} = (x,y,z) \in \Theta^{(3)}_{2}$, we have
\begin{equation*}
\mathcal{A}_{0}{x ^ 2} = (x_{2}^{2},x_{1}^{2})^{T}, \mathcal{A}_{1}{y^2} = (-y_{2}^{2}, y_{1}^{2})^{T},~\text{and}~ \mathcal{A}_{2}{z ^ 2} = (-z_{2}^{2},z_{1}^{2})^{T}. 
\end{equation*} 
\begin{enumerate}
\item[\rm(i)] Let ${\bf w} = (x,y,z) \in \Theta^{(3)}_{2}$ satisfy 
\begin{equation*}
\begin{aligned}
&\mathcal{A}_{0}{x^2} = \mathcal{A}_{1}{y^2} + \mathcal{A}_{2}{z^2}, {x} \wedge {y} = 0,~\text{and}~{x} \wedge {z} = 0,\\
\implies & x_{2}^{2} + y_{2}^{2}+z_{2}^{2} = 0, x_{1}^{2} = y_{1}^{2}+z_{1}^{2}, {x \geq 0}, {y \geq 0}, {z \geq 0}, {x} \ast {y} = 0,{x} \ast {z} = 0.
\end{aligned}
\end{equation*}
A simple calculation yields ${x} = 0, {y} = 0, {z} =0.$ Hence ${\bf w} = {\bf 0}.$ Thus $\widehat{\mathcal{A}}$ is an $\mathbb{EHR}_{0}$ tensor. Similarly, if  ${\bf w} = (x,y,z) \in \Theta^{(3)}_{2}$ satisfies
\begin{equation*}
\mathcal{A}_{0}{x^2} = \mathcal{A}_{1}{y^2} + \mathcal{A}_{2}{z^2}, {x} \ast {y} = 0,{x} \ast {z} = 0,~\text{and}~{y} \ast {z} = 0,
\end{equation*} 
then ${\bf w} = {\bf 0}$. Thus, $\widehat{\mathcal{A}}$ is an $\mathbb{EHND}$ tensor. 
\item[\rm(ii)] If ${x} = (0,1)^{T}$, then  $x \wedge \mathcal{A}_{0}{x^2} = 0.$ So $\mathcal{A}_{0}$ is not an $R_{0}$ tensor.
Letting ${y} = (1,0)^{T} = {z}$, we get $y \wedge \mathcal{A}_{1}{y^2} = 0= {z} \wedge \mathcal{A}_{2}{z^2}.$ So $\mathcal{A}_{1}$  and $\mathcal{A}_{2}$ are not $R_{0}$ tensors.
\item[\rm(iii)] Note that  ${x} = (-1,0)^{T}, {y} = (1,0)^{T},~\text{and}~{z} = (0,0)^{T}$ satisfy 
\begin{equation}\label{EHE112}
\mathcal{A}_{0}{x^2} = \mathcal{A}_{1}{y^2}+\mathcal{A}_{2}{z^2},
{y} \geq {0}, {z} \geq {0}, {x} \ast {y} \leq {0}, {x} \ast {z} \leq 0.
\end{equation}
Thus,  Eq.(\ref{EHE112}) admits a nonzero solution. Hence $\widehat{\mathcal{A}}$ cannot be an $\mathbb{EHE}$ tensor. 
\end{enumerate}
 \end{example}
 
 \begin{remark}\rm
 It is evident from Example \ref{EHR11} that an $\mathbb{EHR}_{0}$ tensor  $\widehat{\mathcal{A}} \in \Theta^{(k+1)}_{(m,n)}$ need not to be composed of $R_{0}$ tensors. 
 \end{remark}
 
\begin{example}\rm \label{non-degenerate tensor but not EHND}
Let $\widehat{\mathcal{A}} = (\mathcal{A}_{0}, \mathcal{A}_{1},\mathcal{A}_{2}) \in \Theta^{(3)}_{(3,2)}$ where $\mathcal{A}_{0} = (a^{0}_{ijk}) \in \mathbb{T}(3,2)$ such that $a^{0}_{111} = a^{0}_{211} = a^{0}_{222} = 1$ and other entries are zero, $\mathcal{A}_{1} = (a^{1}_{ijk}) \in \mathbb{T}(3,2)$ with
$a^{1}_{111} = a^{1}_{122} = a^{1}_{222} = 1$ and other entries are zero, and  $\mathcal{A}_{2} = (a^{2}_{ijk}) \in \mathbb{T}(3,2)$ having 
$a^{2}_{111} = a^{2}_{122} = a^{2}_{211} = a^{2}_{222}= 1$ and other entries as zero. 
We show that (i) $\mathcal{A}_{0}, \mathcal{A}_{1}, \mathcal{A}_{2}$ are non-degenerate tensors, (ii) but $\widehat{\mathcal{A}}$ is not an  $\mathbb{EHND}$ tensor, and (iii) $\widehat{\mathcal{A}}$ is not an $\mathbb{EHR}_{0}$ tensor. 
For any ${\bf w}= ({x},{y},{z}) \in \Theta^{(3)}_{2}$, we have
$$\mathcal{A}_{0}{x}^{2} = (x_{1}^{2}, x_{1}^{2} + x_{2}^{2})^{T},~\mathcal{A}_{1}{y}^{2} = ( y_{1}^{2}+y_{2}^{2}, y_{2}^{2})^{T}~\text{and}~\mathcal{A}_{2}{z}^{2} = (z_{1}^{2}+z_{2}^{2}, z_{1}^{2}+z_{2}^{2})^{T}.$$
\begin{enumerate}
\item[\rm(i)] Note that ${x} \ast \mathcal{A}_{0}{x}^{2} = 
(x_{1}^{3},
x_{2}(x_{1}^{2}+x_{2}^{2}))^{T}
.$ Thus ${x} \ast \mathcal{A}_{0}{x}^{2} = 0$ implies that ${x} = {0}.$ Hence $\mathcal{A}_{0}$ is a non-degenerate tensor. Also,
$y \ast \mathcal{A}_{1}{y}^{2} = 
(y_{1}(y_{1}^{2}+y_{2}^{2}),
y_{2}^{3})^{T} = (0,0)^{T}$ gives ${y} = {0}.$ Hence $\mathcal{A}_{1}$ is a non-degenerate tensor, and  $z \ast \mathcal{A}_{2}{z}^{2} = (z_{1}(z_{1}^{2}+z_{2}^{2}),
z_{2}(z_{1}^{2}+z_{2}^{2}))^{T}.$ Thus ${z} \ast \mathcal{A}_{2}{z}^{2} = 0$ yields ${z} = {0}.$ Hence $\mathcal{A}_{2}$ is a non-degenerate tensor. Hence all of the tensors $\mathcal{A}_{0}, \mathcal{A}_{1}, \mathcal{A}_{2}$ are non-degenerate. 
\item[\rm(ii)] Let ${\bf w}= ({x},{y},{z}) = ((1,0)^{T},(0,1)^{T},(0,0)^{T}) \in \Theta^{(3)}_{2}$. Then, we can easily see that the nonzero ${\bf w}$ satisfies
\begin{equation*}
\mathcal{A}_{0}{x}^{2} = \mathcal{A}_{1}{y}^{2} + \mathcal{A}_{2}{z}^{2}, {x} \ast {y} = 0,~ {x} \ast {z} = 0,~\text{and}~{y}\ast {z} = 0 .
\end{equation*}
Thus  $\widehat{\mathcal{A}}$ is not an $\mathbb{EHND}$ tensor.
\item[\rm(iii)]  Let ${\bf w}= ({x},{y},{z}) = ((1,0)^{T},(0,1)^{T},(0,0)^{T}) \in \Theta^{(3)}_{2}$. Then, we can easily see that the nonzero ${\bf w}$ satisfies
\begin{equation*}
\mathcal{A}_{0}{x}^{2} = \mathcal{A}_{1}{y}^{2} + \mathcal{A}_{2}{z}^{2}, {x} \wedge {y} = 0,~ {x} \wedge {z} = 0.
\end{equation*}
Thus  $\widehat{\mathcal{A}}$ is not an $\mathbb{EHR}_{0}$ tensor.
\end{enumerate}
\end{example}

\begin{example}\rm \label{EHND but not non-degenerate}
Let $\widehat{\mathcal{A}} = (\mathcal{A}_{0}, \mathcal{A}_{1},\mathcal{A}_{2}) \in \Theta^{(3)}_{(4,2)}$ where $\mathcal{A}_{0} = (a^{0}_{ijkl}) \in \mathbb{T}(4,2)$ such that $a^{0}_{1222} =1,~a^{0}_{2111} =1$ and other entries are zero,  $\mathcal{A}_{1} = (a^{1}_{ijkl}) \in \mathbb{T}(4,2)$ with
$a^{1}_{1222} =-1,~a^{1}_{2111} = 1$ and other entries are zero, and $\mathcal{A}_{2} = (a^{2}_{ijkl}) \in \mathbb{T}(4,2)$ having
$a^{2}_{1222} =1,~a^{2}_{2111} = -1$ and other entries as zero.
We show that (i) $\widehat{\mathcal{A}}$ is an $\mathbb{EHND}$ tensor and strong $\mathbb{EHND}$ tensor, (ii) but none of the tensors $\mathcal{A}_{0}, \mathcal{A}_{1}, \mathcal{A}_{2}$ are non-degenerate, and (iii) $\widehat{\mathcal{A}}$ is not an  $\mathbb{EHR}_{0}$ tensor. For any ${\bf w}= ({x},{y},{z}) \in \Theta^{(3)}_{2}$, we have
$$\mathcal{A}_{0}{x}^{3} = (x_{2}^{3}, x_{1}^{3})^{T},~\mathcal{A}_{1}{y}^{3} = (-y_{2}^{3}, y_{1}^{3})^{T}~\text{and}~\mathcal{A}_{2}{z}^{3} = (z_{2}^{3}, -z_{1}^{3})^{T}.$$
\begin{enumerate}
\item[\rm(i)] Let ${\bf w}= ({x},{y},{z}) \in \Theta^{(3)}_{2}$ satisfy
\begin{equation*}
\mathcal{A}_{0}{x}^{3} = \mathcal{A}_{1}{y}^{3} + \mathcal{A}_{2}{z}^{3}, {x} \ast {y} = 0,~ {x} \ast {z} = 0,~\text{and}~{y}\ast {z} = 0 .
\end{equation*}
From the above equations, we get
\begin{eqnarray*}
& x_{2}^{3}+y_{2}^{3} - z_{2}^{3} =0, x_{1}^{3}-y_{1}^{3} + z_{1}^{3} = 0,\\
& x_{1}y_{1} = 0, x_{2}y_{2} = 0, x_{1}z_{1} = 0, x_{2}z_{2} = 0, y_{1}z_{1} =0, y_{2}z_{2}=0. 
\end{eqnarray*}
A simple calculation yields ${x}= 0, {y} = {0}, {z} = {0}$, and hence ${\bf w} = {\bf 0}.$ Thus  $\widehat{\mathcal{A}}$ is an  $\mathbb{EHND}$ tensor. Now, for any ${x},\bar{x}, {y},\bar{y},{z},\bar{z} \in \mathbb{R}^{2},$ we have
\begin{equation*}
\mathcal{A}_{0}{x}^{3}-\mathcal{A}_{0}\bar{x}^{3} = 
(x_{2}^{3}-{\bar{x}}_{2}^{3},
x_{1}^{3}-{\bar{x}}_{1}^{3})^{T},~\mathcal{A}_{1}{y}^{3}-\mathcal{A}_{1}{\bar{y}}^{3} = (-y_{2}^{3}+{\bar{y}}_{2}^{3},
y_{1}^{3}-{\bar{y}}_{1}^{3})^{T},~\text{and}
\end{equation*}
\begin{equation*}
\mathcal{A}_{2}{z}^{3}-\mathcal{A}_{2}{\bar{z}}^{3} =
(z_{2}^{3} -{\bar{z}}_{2}^{3},-z_{1}^{3}+{\bar{z}}_{1}^{3})^{T}.
\end{equation*}
Suppose that ${x},\bar{x}, {y},\bar{y},{z},\bar{z} \in \mathbb{R}^{2}$  satisfy
\begin{equation*}
\begin{aligned}
&\mathcal{A}_{0}{x}^{3}-\mathcal{A}_{0}\bar{x}^{3} = (\mathcal{A}_{1}{y}^{3}-\mathcal{A}_{1}{\bar{y}}^{3}) + (\mathcal{A}_{2}{z}^{3}-\mathcal{A}_{2}{\bar{z}}^{3}),~\text{and}\\
& ({x}-\bar{x}) \ast ({y}-\bar{y}) = 0, ({x} -\bar{x})\ast({z} -\bar{z}) = {0}~\text{and}~({y}-\bar{y})\ast({z}-\bar{z})=0.
\end{aligned}
\end{equation*}
A simple calculation yields that ${x} =\bar{x}, {y} =\bar{y}$ and ${z} =\bar{z}.$ Hence $\widehat{\mathcal{A}}$ is a strong $\mathbb{EHND}$ tensor.
\item[\rm(ii)] If $x= (1,0)^{T} = y$, then $x \ast \mathcal{A}_{0}{x}^{3} = 0 = y \ast \mathcal{A}_{1}{y}^{3}$. Thus $\mathcal{A}_{0}$ and $\mathcal{A}_{1}$ are  not non-degenerate. Taking $z= (0,1)^{T}$ gives  $z \ast \mathcal{A}_{2}{z}^{3} = 0.$ Thus $\mathcal{A}_{2}$ is not non-degenerate. Hence none of the tensors $\mathcal{A}_{0}, \mathcal{A}_{1}, \mathcal{A}_{2}$ are non-degenerate.
\item[\rm(iii)] Let ${\bf w} = (x,y,z) = ((0,0)^{T}, (1,1)^{T}, (1,1)^{T}) \in \Theta^{(3)}_{2}$. Then we can see easily that the nonzero ${\bf w}$ satisfies
\begin{equation*}
\mathcal{A}_{0}{x ^{3}} = \mathcal{A}_{1}{y^{3}} + \mathcal{A}_{2}{z^{3}},~{x}\wedge {y} = 0~\text{and}~{x} \wedge {z} = {0}.
\end{equation*}
Hence $\widehat{\mathcal{A}}$ is not an  $\mathbb{EHR}_{0}$ tensor.
\end{enumerate}
\end{example}

\begin{remark}\rm
We would like to mention that
\begin{enumerate}
  \item[\rm(i)] even if all the tensors $\mathcal{A}_{i} \in \mathbb{T}(m,n)$, where $0 \leq i \leq k,$ are non-degenerate tensors, $\widehat{\mathcal{A}} = (\mathcal{A}_{0}, \mathcal{A}_{1},...,\mathcal{A}_{k}) \in \Theta^{(k+1)}_{(m,n)}$ need not necessarily  an $\mathbb{EHND}$ tensor (see Example \ref{non-degenerate tensor but not EHND}).
\item[\rm(ii)] $\widehat{\mathcal{A}} = (\mathcal{A}_{0}, \mathcal{A}_{1},...,\mathcal{A}_{k}) \in \Theta^{(k+1)}_{(m,n)}$ being an $\mathbb{EHND}$ tensor need not imply that all the tensors $\mathcal{A}_{i} \in \mathbb{T}(m,n)$, where $0 \leq i \leq k,$ are  non-degenerate tensors (refer to Example \ref{EHND but not non-degenerate}).
\end{enumerate}
\end{remark}

\begin{example}\rm
Let $\widehat{\mathcal{A}} = (\mathcal{A}_{0}, \mathcal{A}_{1}) \in  \Theta^{(2)}_{(4,2)}$ where $\mathcal{A}_{0} = (a^{0}_{ijkl}) \in \mathbb{T}(4,2)$ such that $a^{0}_{1111} =1, a^{0}_{2222} =1$ and other entries are zero, and $\mathcal{A}_{1} = (a^{1}_{ijkl}) \in \mathbb{T}(4,2)$ with $a^{1}_{1111} =1, a^{1}_{1222} =1, a^{1}_{2222} =1$ and other entries are zero. We show that $\widehat{\mathcal{A}}$ is a strong $\mathbb{EHP}$ tensor. For any $x, \bar{x}, y, \bar{y}$ in $\mathbb{R}^{2}$, we have
$$\mathcal{A}_{0}{x^3} - \mathcal{A}_{0}{\bar{x}^3} = \begin{bmatrix}
x_{1}^{3} - \bar{x}_{1}^{3}\\
x_{2}^{3} - \bar{x}_{2}^{3}
\end{bmatrix}
,~\text{and}~\mathcal{A}_{1}{y^3} -\mathcal{A}_{1}{\bar{y}^{3}} = \begin{bmatrix}
y_{1}^{3}+y_{2}^{3} - (\bar{y}_{1}^{3}+\bar{y}_{2}^{3})\\
y_{2}^{3} -\bar{y}_{2}^{3}
\end{bmatrix}.$$ 
Suppose that ${x},\bar{x}, {y},\bar{y} \in \mathbb{R}^{2}$  satisfy
\begin{equation*}
\mathcal{A}_{0}{x}^{3}-\mathcal{A}_{0}\bar{x}^{3} = \mathcal{A}_{1}{y}^{3}-\mathcal{A}_{1}{\bar{y}}^{3},~\text{and}~
 ({x}-\bar{x}) \ast ({y}-\bar{y}) \leq  0.
\end{equation*}
A simple calculation yields ${x} = \bar{x}$ and ${y} = \bar{y}$. 
Hence $\widehat{\mathcal{A}}$ is a strong $\mathbb{EHP}$ tensor.
\end{example}

\begin{proposition}\label{subset}
The following statements are valid.
\begin{enumerate}
\item[\rm(i)] $\text{strong}~ \mathbb{EHP}   \subseteq  \mathbb{EHP} \subseteq \mathbb{EHE} \subseteq \mathbb{EHR}_{0},$ 
\item[\rm(ii)] $\mathbb{EHND} \subseteq \mathbb{EHR}_{0}$, when $0\leq k \leq 1.$ This need not be true in general (see Example \ref{EHND but not non-degenerate} for $k = 2$),
\item[\rm(iii)] $\mathbb{EHP}  \subseteq \mathbb{EHND}$ and strong $\mathbb{EHP} \subseteq \text{strong}~\mathbb{EHND}\subseteq  \mathbb{EHND},$    
\item[\rm(iv)] $ \text{strong}~ \mathbb{EHND} \subseteq  \mathbb{EHR}_{0},$ when $0 \leq k \leq 1$. This need not be true in general (see Example \ref{EHND but not non-degenerate} for $k = 2$).
\end{enumerate}
\begin{proof}
\begin{enumerate}
\item[\rm(i)] This follows  from the Definition \ref{special tensors}.
\item[\rm(ii)] When $k = 0$, it follows from \cite[Proposition 4.1]{MR4310678} that a non-degenerate tensor is an $R_{0}$ tensor. Now, suppose that $k =1$ and $\widehat{\mathcal{A}} = (\mathcal{A}_{0}, \mathcal{A}_{1}) \in \Theta^{(2)}_{(m,n)}$ is an  $\mathbb{EHND}$ tensor. This gives
\begin{equation*}
\mathcal{A}_{0}{x_{0}^{m-1}} = \mathcal{A}_{1}{x_{1}^{m-1}}~\text{and}~{x_{0}} \ast {x_{1}} = 0.
\end{equation*} 
We claim that $\widehat{\mathcal{A}}$ is an $\mathbb{EHR}_{0}$ tensor. Let ${\bf x} = (x_{0},x_{1}) \in \Theta^{(2)}_{n}$ satisfy
\begin{equation*}
\mathcal{A}_{0}{x_{0}^{m-1}} = \mathcal{A}_{1}{x_{1}^{m-1}}~\text{and}~{x_{0}} \wedge {x_{1}} = 0.
\end{equation*}
This implies that $x_{0} \geq {0}, x_{1} \geq {0}~\text{and}~x_{0} \ast x_{1} = 0.$ As $\widehat{\mathcal{A}}$ is an  $\mathbb{EHND}$ tensor, we get ${\bf x} = {\bf 0}.$ Hence $\widehat{\mathcal{A}}$ is an  $\mathbb{EHR}_{0}$ tensor. 
\item[\rm(iii)] Let $\widehat{\mathcal{A}} = (\mathcal{A}_{0}, \mathcal{A}_{1},...,\mathcal{A}_{k}) \in \Theta^{(k+1)}_{(m,n)}$ be an $\mathbb{EHP}$ tensor. Suppose that ${\bf x} = (x_{0},x_{1},...,x_{k}) \in \Theta^{(k+1)}_{n}$ satisfies
\begin{equation*}
\begin{aligned}
    \mathcal{A}_{0}x_{0}^{m-1} = {\sum_{j =1}^k \mathcal{A}_{j}x_{j}^{m-1}}~\text{and}~
    x_{i} \ast x_{j} = {0}~\forall~0\leq i < j \leq k.
\end{aligned}
\end{equation*}
In particular, ${\bf x} = (x_{0},x_{1},...,x_{k}) \in \Theta^{(k+1)}_{n}$ satisfies
\begin{equation*}
\mathcal{A}_{0}x_{0}^{m-1} = {\sum_{j =1}^k \mathcal{A}_{j}x_{j}^{m-1}}~\text{and}~
x_{0} \ast x_{i} \leq {0}~\forall~i \in [k]. 
\end{equation*}
As $\widehat{\mathcal{A}}$ is an  $\mathbb{EHP}$ tensor, we get ${\bf x} = {\bf 0}$ and thus $\widehat{\mathcal{A}}$ is an  $\mathbb{EHND}$ tensor. In a similar manner we can show that if  $\widehat{\mathcal{A}} = (\mathcal{A}_{0}, \mathcal{A}_{1},...,\mathcal{A}_{k}) \in \Theta^{(k+1)}_{(m,n)}$ is a strong $\mathbb{EHP}$ tensor, then $\widehat{\mathcal{A}}$ is a strong $\mathbb{EHND}$ tensor.     
Now assume that $\widehat{\mathcal{A}} = (\mathcal{A}_{0}, \mathcal{A}_{1},...,\mathcal{A}_{k}) \in \Theta^{(k+1)}_{(m,n)}$ is a strong $\mathbb{EHND}$ tensor. We show that $\widehat{\mathcal{A}}$ is an $\mathbb{EHND}$ tensor. Let ${{\bf x}} = (x_{0},x_{1},...,x_{k}) \in \Theta^{(k+1)}_{n}$ satisfy
\begin{equation}\label{EHND}
\mathcal{A}_{0}x_{0}^{m-1} = {\sum_{j =1}^k \mathcal{A}_{j}x_{j}^{m-1}}~\text{and}~x_{i} \ast x_{j} = {0}~\forall~0\leq i < j \leq k.
\end{equation}
In particular, Eq.(\ref{EHND}) is satisfied for all $\overline{{\bf x}} = ({\bar x}_{0}, \bar {x}_{1},...,\bar {x}_{k}) \in \Theta^{(k+1)}_{n}  $ such that $\bar{x}_{i} = 0,$ for all $0 \leq i \leq k.$ Since $\widehat{\mathcal{A}}$ is a strong $\mathbb{EHND}$ tensor, we get ${\bf x} = {\bf 0}$ and therefore $\widehat{\mathcal{A}}$ is an  $\mathbb{EHND}$ tensor.
\item[\rm(iv)] From  (ii) and (iii), it is clear that $\text{strong}~ \mathbb{EHND}\subseteq \mathbb{EHR}_{0},\text{for}~ k \in \{0,1\}.$   
\end{enumerate}
\end{proof}
\end{proposition}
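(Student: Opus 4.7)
The plan is to prove each containment by showing that the hypotheses defining the smaller class either directly imply, or can be specialized to, the hypotheses defining the larger class, so that the larger class's defining implication delivers the desired conclusion. The two recurring devices are Proposition \ref{pp1}, which equates $x \wedge y = 0$ with ``$x \geq 0$, $y \geq 0$, $x \ast y = 0$'', and the substitution $\overline{x}_i = 0$ in any ``strong'' definition, which recovers its ordinary counterpart.

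For part (i), I would first verify strong $\mathbb{EHP} \subseteq \mathbb{EHP}$ by specializing $\overline{x}_i = 0$ for all $i$ in the strong $\mathbb{EHP}$ definition. The inclusion $\mathbb{EHP} \subseteq \mathbb{EHE}$ is then immediate because the $\mathbb{EHE}$ premises are obtained from the $\mathbb{EHP}$ premises by adding the extra sign condition $x_i \geq 0$ for $i \in [k]$, so any $\mathbb{EHE}$-configuration is in particular an $\mathbb{EHP}$-configuration. Finally $\mathbb{EHE} \subseteq \mathbb{EHR}_0$ follows from Proposition \ref{pp1}(b): the $\mathbb{EHR}_0$ condition $x_0 \wedge x_i = 0$ forces $x_i \geq 0$ and $x_0 \ast x_i = 0 \leq 0$, so any $\mathbb{EHR}_0$-configuration also satisfies the $\mathbb{EHE}$ hypothesis.

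For part (iii), I would prove $\mathbb{EHP} \subseteq \mathbb{EHND}$ by observing that the $\mathbb{EHND}$ hypothesis $x_i \ast x_j = 0$ for $0 \leq i < j \leq k$ in particular gives $x_0 \ast x_i = 0 \leq 0$ for every $i \in [k]$, triggering the $\mathbb{EHP}$ property; the same argument applied to the differences $x_i - \overline{x}_i$ yields strong $\mathbb{EHP} \subseteq$ strong $\mathbb{EHND}$. The inclusion strong $\mathbb{EHND} \subseteq \mathbb{EHND}$ is obtained by the usual $\overline{x}_i = 0$ substitution. Parts (ii) and (iv) are then essentially bookkeeping: for $k = 0$ the claim $\mathbb{EHND} \subseteq \mathbb{EHR}_0$ is the cited Proposition 4.1 of \cite{MR4310678}; for $k = 1$, Proposition \ref{pp1}(b) shows that the $\mathbb{EHR}_0$ hypothesis $x_0 \wedge x_1 = 0$ implies the $\mathbb{EHND}$ hypothesis $x_0 \ast x_1 = 0$, and (iv) follows by composing (ii) with strong $\mathbb{EHND} \subseteq \mathbb{EHND}$. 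Since every step reduces to either a definitional substitution or an appeal to Proposition \ref{pp1}, no step poses a genuine obstruction; the only subtlety worth flagging is that the restriction $k \leq 1$ in (ii) and (iv) is sharp, as Example \ref{EHND but not non-degenerate} already witnesses failure for $k = 2$.
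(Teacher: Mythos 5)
Your proposal is correct and follows essentially the same route as the paper: definitional specialization (setting $\overline{x}_i=0$ to pass from strong to ordinary classes), the observation that a class whose defining implication has a stronger premise contains one with a weaker premise, and Proposition \ref{pp1}(b) to convert $x_0\wedge x_i=0$ into the sign and product conditions. The only difference is that you spell out part (i), which the paper dismisses as immediate from the definitions; your reasoning there matches what the paper intends.
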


\section{Properties of the Solution Set of the EHTCP}\label{Section 4}
In this section, we first define the degree of the EHTCP and then we provide several properties of the solution set of the EHTCP such as  nonemptiness, compactness, uniqueness and finiteness with regards to the newly defined  structured tensors.

\subsection{Degree of EHTCP}\label{Degree of EHTCP}
Let $\widehat{\mathcal{A}} = (\mathcal{A}_{0}, \mathcal{A}_{1},...,\mathcal{A}_{k}) \in \Theta^{(k+1)}_{(m,n)},$~ $\widehat{{\bf d}} = (d_{1},d_{2},...,d_{k-1}) \in \Theta^{(k-1)}_{n,++}$ and $q \in \mathbb{R}^{n}$. Corresponding to the ${\rm EHTCP}(\widehat{\mathcal{A}},\widehat{\bf d},q)$, we define two functions $\Psi_{(\widehat{\mathcal {A}},\widehat{\bf d})}$ and $\Psi_{\widehat{\mathcal{A}}}$ on $\Theta^{(k+1)}_{n}$ as follows:
\begin{equation*}
\Psi_{(\widehat{\mathcal {A}},\widehat{\bf d})} ({\bf x}) = \begin{bmatrix}
x_{0} \wedge x_{1} \\
(d_{1} - x_{1}) \wedge x_{2}\\
\vdots\\
(d_{k-1} - x_{k-1}) \wedge x_{k}\\
\mathcal{A}_{0} {x_{0}^{m-1}} - {\displaystyle\sum_{j = 1}^{k} \mathcal{A}_{j}{x_{j}^{m-1}}}
\end{bmatrix} 
\text{and}~
\Psi_{\widehat{\mathcal{A}}} ({\bf x}) =  
\begin{bmatrix}
x_{0} \wedge x_{1} \\
x_{0} \wedge x_{2}\\
\vdots\\
x_{0} \wedge x_{k}\\
\mathcal{A}_{0} {x_{0}^{m-1}} - {\displaystyle\sum_{j = 1}^{k} \mathcal{A}_{j}{x_{j}^{m-1}}}
\end{bmatrix}. 
\end{equation*}
Note that if  $\widehat{\mathcal{A}}$ is an $\mathbb{EHR}_{0}$ tensor, then $\Psi_{\widehat{\mathcal{A}}}({\bf x}) = {\bf 0} \iff {\bf x} = {\bf 0}.$ Let $\Omega \subseteq \Theta^{(k+1)}_{n}$ be any bounded open set containing the zero vector.   Then the degree of the function $\Psi_{\widehat{\mathcal{A}}}$ over $\Omega$  with respect to zero is defined and it is independent of $\Omega$. We denote $\deg(\Psi_{\widehat{\mathcal{A}}},\Omega,{\bf 0})$ as $\deg(\widehat{\mathcal{A}})$ and call this as the $\rm{EHTCP}$-degree. This $\rm{EHTCP}$-degree coincides with the $\rm{HTCP}$-degree \cite{Yadav30082024}, when $k =1$ and with the $\rm{TCP}$-degree when $k =1$ and $\mathcal{A}_{0} = \mathcal{I}$.

With regards to Lemma \ref{solution}, one can observe that $\Psi_{(\widehat{\mathcal {A}},{\widehat{\bf d}})} ({\bf x}) = {\bf 0} \implies \Psi_{\widehat{\mathcal{A}}}({\bf x}) = {\bf 0}$. Hence, if $\widehat{\mathcal{A}}$ is an $\mathbb{EHR}_{0}$ tensor, then $\Psi_{(\widehat{\mathcal {A}},{\widehat{\bf d}})} ({\bf x}) = {\bf 0} \iff {\bf x} = {\bf 0}.$  Thus for any bounded open set $\Omega \subseteq \Theta^{(k+1)}_{n}$ containing the zero vector, we have 
$$\deg(\Psi_{(\widehat{\mathcal {A}},{\widehat{\bf d}})}, \Omega, {\bf 0}) = \deg(\Psi_{(\widehat{\mathcal {A}},{\widehat{\bf d}})},{\bf 0}).$$

In the following lemma, we show that $\deg(\widehat{\mathcal A})$ (the EHTCP-degree) is same as $\deg(\Psi_{(\widehat{\mathcal{A}}, {\widehat{\bf d}})}, {\bf 0})$ in the presence of an $\mathbb {EHR}_{0}$ tensor.

\begin{lemma}\label{degsame}
Let $\widehat{\mathcal{A}} = (\mathcal{A}_{0}, \mathcal{A}_{1},...,\mathcal{A}_{k}) \in \Theta^{(k+1)}_{(m,n)},$~ $\widehat{{\bf d}} = (d_{1},d_{2},...,d_{k-1}) \in \Theta^{(k-1)}_{n,++}$. If $\widehat{\mathcal{A}}$ is an $\mathbb{EHR}_{0}$ tensor, then
$$\deg(\Psi_{(\widehat{\mathcal{A}}, {\widehat{\bf d}})}, {\bf 0}) = \deg(\widehat{\mathcal{A}}).$$
\end{lemma}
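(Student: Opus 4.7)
The strategy is to build a homotopy from $\Psi_{\widehat{\mathcal{A}}}$ to $\Psi_{(\widehat{\mathcal{A}}, \widehat{\bf d})}$ whose zero set is $\{{\bf 0}\}$, and then invoke the Homotopy Invariance Property of the degree. A natural candidate is the map $H:\Theta^{(k+1)}_{n}\times[0,1]\to\Theta^{(k+1)}_{n}$ defined by
\begin{equation*}
H({\bf x},t) = \begin{bmatrix}
x_{0}\wedge x_{1}\\
((1-t)x_{0}+t(d_{1}-x_{1}))\wedge x_{2}\\
\vdots\\
((1-t)x_{0}+t(d_{k-1}-x_{k-1}))\wedge x_{k}\\
\mathcal{A}_{0}x_{0}^{m-1}-\displaystyle\sum_{j=1}^{k}\mathcal{A}_{j}x_{j}^{m-1}
\end{bmatrix}.
\end{equation*}
By inspection $H(\cdot,0)=\Psi_{\widehat{\mathcal{A}}}$ and $H(\cdot,1)=\Psi_{(\widehat{\mathcal{A}},\widehat{\bf d})}$, so $H$ is a continuous homotopy connecting the two maps of interest.

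The main step is to verify that the set of zeros $\Delta=\{{\bf x}\in\Theta^{(k+1)}_{n}:H({\bf x},t)={\bf 0}\text{ for some }t\in[0,1]\}$ equals $\{{\bf 0}\}$ (and hence is bounded). Fix $({\bf x},t)$ with $H({\bf x},t)={\bf 0}$. From the first block $x_{0}\wedge x_{1}=0$ and from each intermediate block we get $((1-t)x_{0}+t(d_{j}-x_{j}))\wedge x_{j+1}=0$, so by Proposition \ref{pp1} every $x_{i}$ is nonnegative. I would then fix a coordinate $i\in[n]$ with $(x_{0})_{i}>0$ and argue by induction on $j$ that $(x_{j})_{i}=0$ for all $j\in[k]$: the base case $j=1$ is immediate from $x_{0}\wedge x_{1}=0$, and in the inductive step the assumption $(x_{j})_{i}=0$ together with $d_{j}\in\mathbb{R}^{n}_{++}$ and $t\in[0,1]$ gives
\begin{equation*}
((1-t)x_{0}+t(d_{j}-x_{j}))_{i}=(1-t)(x_{0})_{i}+t(d_{j})_{i}>0,
\end{equation*}
which forces $(x_{j+1})_{i}=0$. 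Consequently $x_{0}\ast x_{i}=0$ for every $i\in[k]$, and combined with nonnegativity this yields $x_{0}\wedge x_{i}=0$ for all $i\in[k]$. Together with the tensor equation $\mathcal{A}_{0}x_{0}^{m-1}=\sum_{j=1}^{k}\mathcal{A}_{j}x_{j}^{m-1}$, the $\mathbb{EHR}_{0}$ hypothesis on $\widehat{\mathcal{A}}$ now forces ${\bf x}={\bf 0}$.

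Once $\Delta=\{{\bf 0}\}$ is established, pick any bounded open neighbourhood $\Omega\subseteq\Theta^{(k+1)}_{n}$ of ${\bf 0}$; then $\Delta\subseteq\Omega$ and no zero of $H(\cdot,t)$ lies on $\partial\Omega$ for any $t\in[0,1]$. The Homotopy Invariance Property gives $\deg(H(\cdot,0),\Omega,{\bf 0})=\deg(H(\cdot,1),\Omega,{\bf 0})$, which by the definitions preceding the lemma is exactly $\deg(\widehat{\mathcal{A}})=\deg(\Psi_{(\widehat{\mathcal{A}},\widehat{\bf d})},{\bf 0})$.

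The only nontrivial step is the coordinatewise inductive argument showing that positivity of $(x_{0})_{i}$ propagates into vanishing of $(x_{j})_{i}$ along the chain; the cascading structure of the complementarity conditions combined with the strict positivity of the $d_{j}$'s is exactly what makes this work uniformly in $t\in[0,1]$, and it is also the place where the homotopy would fail if one tried the more obvious interpolation $t\mapsto td_{j}$ (at $t=0$ one would then have $(-x_{j})\wedge x_{j+1}=0$ rather than $x_{0}\wedge x_{j+1}=0$, and the $\mathbb{EHR}_{0}$ hypothesis could not be applied).
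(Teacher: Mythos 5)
Your proof is correct and follows essentially the same route as the paper: the homotopy $H$ you construct is exactly the paper's $\Phi$, and the coordinatewise argument that positivity of $(x_{0})_{i}$ forces $(x_{j})_{i}=0$ down the chain (via strict positivity of the $d_{j}$'s, uniformly in $t$) is the same key step the authors use before invoking the $\mathbb{EHR}_{0}$ property and homotopy invariance. No discrepancies to report.
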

\begin{proof}
Let us consider a homotopy $\Phi : \Theta^{(k+1)}_{n} \times [0,1] \to  \Theta^{(k+1)}_{n}$ defined as
\begin{equation}\label{degree is same}
\Phi ({\bf x},t) = \begin{bmatrix}
x_{0} \wedge x_{1} \\
(t(d_{1} - x_{1}) + (1-t)x_{0}) \wedge x_{2}\\
\vdots\\
(t(d_{k-1} - x_{k-1}) + (1-t)x_{0}) \wedge x_{k}\\
\mathcal{A}_{0}x_{0}^{m-1} - {\displaystyle\sum_{j = 1}^{k} \mathcal{A}_{j}{x_{j}^{m-1}}}
\end{bmatrix},
\end{equation}
where ${\bf x} = (x_{0},x_{1},...,x_{k}) \in \Theta^{(k+1)}_{n},~ \widehat{{\bf d}} = (d_{1},d_{2},...,d_{k-1}) \in \Theta^{(k-1)}_{n,++}~~\text{and}~~\widehat{\mathcal{A}} = (\mathcal{A}_{0}, \mathcal{A}_{1},...,\mathcal{A}_{k}) \in \Theta^{(k+1)}_{(m,n)}$. For $t = 0$, we have $\Phi({\bf x}, {0}) = \Psi_{\widehat{\mathcal{A}}}({\bf x})$ and when $t =1$, we have $\Phi({\bf x},{1}) = \Psi_{(\widehat{\mathcal{A}},\widehat{\bf d})}({\bf x}).$ We show that the zero set of $\Phi({\bf x},t)$ contains only the zero vector. Assume that for some $t \in [0,1],$ $\Phi({\bf x},t) = {\bf 0}.$ From the first row in the vector of  Eq. (\ref{degree is same}), we get $x_{0} \wedge x_{1} = {0}.$ This implies that there exists an index set $I \subseteq [n]$ such that
\begin{equation}\label{alpha}
(x_{0})_{i} = \begin{cases} > 0,  & i \in I \\
0, & i \notin I
 \end{cases}
 ~\text{and}~(x_1)_i=0 ~\forall i\in I.
\end{equation} 
From the second row in the vector of Eq.(\ref{degree is same}), we have
$$(t(d_{1} - x_{1}) + (1-t)x_{0}) \wedge x_{2} = 0.$$
This gives $x_{2} \geq {0}$. As $d_{1} > 0$ and $(x_{0})_{i} > {0}$ for all $i \in I$ implies that $(x_{2})_{i} = 0$ for all $i \in I$. So from Eq.(\ref{alpha}), it can be seen easily that $x_{0} \wedge x_{2} = 0.$ By a  similar process, we get $x_{0} \wedge x_{j} = 0,~\forall~j \in [k].$ Thus $\Phi({\bf x},t) = {\bf 0}$ gives
$$\mathcal{A}_{0}x_{0}^{m-1} = {\displaystyle\sum_{j = 1}^{k} \mathcal{A}_{j}{x_{j}^{m-1}}}~\text{and}~x_{0} \wedge x_{j} = 0,~\forall~j \in [k].$$ 
As $\widehat{\mathcal{A}}$ is an $\mathbb{EHR}_{0}$ tensor, we get ${\bf x} = {\bf 0}.$ Hence $\Phi({\bf x},t) = {\bf 0} \iff {\bf x} = {\bf 0}.$ So by the property 3 (homotopy invariance), for any bounded open set $\Omega$ containing zero vector in $\Theta^{(k+1)}_{n}$, we have
$$\deg(\Psi_{(\widehat{\mathcal{A}},\widehat{\bf d})}, \Omega, {\bf 0}) = \deg(\Psi_{\widehat{\mathcal{A}}}, \Omega, {\bf 0}) \implies \deg(\Psi_{(\widehat{\mathcal{A}},\widehat{\bf d})}, {\bf 0}) = \deg(\widehat{\mathcal{A}}),$$
due to $\widehat{\mathcal{A}}$ being an $\mathbb{EHR}_{0}$ tensor.
\end{proof}

\subsection{Existence Results}\label{Existence and Compactness}
In this subsection, we establish the nonemptiness and compactness of ${\rm SOL}(\widehat{\mathcal{A}},{\widehat{\bf d}},{q})$. Prior to proving our first existence result for the EHTCP, we present a theorem concerning the boundedness of the solution set of EHTCP. 
We omit the proof as it is identical to the one in the classic  case of TCP \cite[Theorem 3.2]{MR3513267}. 
 Note that ${\rm SOL}(\widehat{\mathcal{A}},{\widehat{\bf d}},{q})$ can be an empty set for some ${q} \in {\mathbb R}^{n}$ and $\widehat{\bf d} \in \Theta^{(k-1)}_{n,++}$. 
\begin{theorem}\label{bounded}
Let $\widehat{\mathcal{A}} = (\mathcal{A}_{0}, \mathcal{A}_{1},...,\mathcal{A}_{k}) \in \Theta^{(k+1)}_{(m,n)}.$ If $\widehat{\mathcal{A}}$ is an  $\mathbb{EHR}_{0}$ tensor, then ${\rm SOL}(\widehat{\mathcal{A}}, \widehat{\bf d},{q})$ is bounded for any $\widehat{\bf d} \in \Theta^{(k-1)}_{n,++}$ and ${q} \in {\mathbb R}^{n}$.
\end{theorem}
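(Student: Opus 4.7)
The plan is to argue by contradiction using the standard normalization/homogenization trick, exploiting the fact that the tensor action ${\bf x} \mapsto \mathcal{A}{\bf x}^{m-1}$ is $(m-1)$-homogeneous while the pointwise minimum is positively $1$-homogeneous. Specifically, suppose $\text{SOL}(\widehat{\mathcal{A}}, \widehat{\bf d}, q)$ is unbounded. Then there is a sequence $\{{\bf x}^{(\ell)}\} = \{(x_0^{(\ell)}, x_1^{(\ell)}, \ldots, x_k^{(\ell)})\}$ in $\text{SOL}(\widehat{\mathcal{A}}, \widehat{\bf d}, q)$ with $t_\ell := \|{\bf x}^{(\ell)}\| \to \infty$.

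The key first move is to invoke Lemma \ref{solution}, which strips the $\widehat{\bf d}$-data out of the complementarity condition: for every $\ell$,
\begin{equation*}
\mathcal{A}_{0} (x_0^{(\ell)})^{m-1} = q + \sum_{j=1}^{k} \mathcal{A}_{j} (x_j^{(\ell)})^{m-1}, \qquad x_0^{(\ell)} \wedge x_i^{(\ell)} = 0 \quad \forall\, i \in [k].
\end{equation*}
Next I normalize by setting $\tilde{{\bf x}}^{(\ell)} := {\bf x}^{(\ell)}/t_\ell$, so that $\tilde{{\bf x}}^{(\ell)}$ lies on the unit sphere of $\Theta^{(k+1)}_n$. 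By compactness of the sphere, after passing to a subsequence we may assume $\tilde{{\bf x}}^{(\ell)} \to \tilde{{\bf x}} = (\tilde{x}_0, \tilde{x}_1, \ldots, \tilde{x}_k)$ with $\|\tilde{{\bf x}}\| = 1$.

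Now I divide the tensor equation by $t_\ell^{m-1}$, obtaining
\begin{equation*}
\mathcal{A}_{0} (\tilde{x}_0^{(\ell)})^{m-1} = \frac{q}{t_\ell^{m-1}} + \sum_{j=1}^{k} \mathcal{A}_{j} (\tilde{x}_j^{(\ell)})^{m-1},
\end{equation*}
and pass to the limit $\ell \to \infty$; since $q/t_\ell^{m-1} \to 0$ and the tensor action is continuous, this yields $\mathcal{A}_{0} \tilde{x}_0^{m-1} = \sum_{j=1}^{k} \mathcal{A}_{j} \tilde{x}_j^{m-1}$. Similarly, positive $1$-homogeneity of the min map gives $\tilde{x}_0^{(\ell)} \wedge \tilde{x}_i^{(\ell)} = 0$ for every $i \in [k]$, and the continuity of $\wedge$ then yields $\tilde{x}_0 \wedge \tilde{x}_i = 0$ for all $i \in [k]$ in the limit. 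Applying the defining $\mathbb{EHR}_0$ implication to $\tilde{{\bf x}}$ forces $\tilde{{\bf x}} = {\bf 0}$, contradicting $\|\tilde{{\bf x}}\| = 1$.

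There is no substantial obstacle; the main thing to be careful about is that the EHTCP complementarity conditions involving $d_j$ do not rescale nicely under ${\bf x} \mapsto {\bf x}/t_\ell$ (since $d_j$ is fixed while $x_j^{(\ell)}$ blows up), which is why the reduction via Lemma \ref{solution} at the outset is essential: it replaces the $\widehat{\bf d}$-dependent conditions by the purely homogeneous conditions $x_0 \wedge x_i = 0$, which are exactly what the $\mathbb{EHR}_0$ hypothesis is designed to handle.
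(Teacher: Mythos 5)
Your proof is correct and is essentially the argument the paper has in mind: the paper omits the proof of this theorem, citing the classical $R_{0}$-tensor boundedness argument for the TCP, which is exactly your normalization/homogenization scheme, and the same reduction via Lemma \ref{solution} followed by rescaling along an unbounded sequence appears verbatim in the paper's proof of part (iii) of Lemma \ref{ND11}. Your closing remark about why Lemma \ref{solution} is needed to eliminate the non-homogeneous $\widehat{\bf d}$-conditions before rescaling is precisely the right point of care.
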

The converse of the above theorem is not valid even in the case of $m =2$ (see, \cite[Example 3.1]{yadav2023generalizations}).
We now provide our first existence result for EHTCP.
\begin{theorem}\label{nonempty solution}
Let $\widehat{\mathcal{A}} = (\mathcal{A}_{0}, \mathcal{A}_{1},...,\mathcal{A}_{k}) \in \Theta^{(k+1)}_{(m,n)}$ be an $\mathbb{EHR}_{0}$ tensor and $\deg({\widehat{\mathcal{A}}}) \neq 0$. Then ${\rm SOL}(\widehat{\mathcal{A}}, \widehat{\bf d},{q})$ is nonempty and compact for each $\widehat{\bf d} \in \Theta^{(k-1)}_{n,++}$ and ${q} \in {\mathbb R}^{n}$.
\end{theorem}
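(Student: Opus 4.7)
The plan is to reformulate $\mathrm{EHTCP}(\widehat{\mathcal{A}}, \widehat{\bf d}, q)$ as a zero-finding problem for a continuous map on $\Theta^{(k+1)}_n$, and then extract nonemptiness of the solution set from the hypothesis $\deg(\widehat{\mathcal{A}}) \neq 0$ via a homotopy deforming $q$ to $0$. Concretely, I would define $F_{q}: \Theta^{(k+1)}_n \to \Theta^{(k+1)}_n$ by
\begin{equation*}
F_q({\bf x}) \;=\; \Psi_{(\widehat{\mathcal{A}}, \widehat{\bf d})}({\bf x}) \;-\; (0, 0, \ldots, 0, q)^{T},
\end{equation*}
so that ${\bf x} \in \mathrm{SOL}(\widehat{\mathcal{A}}, \widehat{\bf d}, q)$ if and only if $F_q({\bf x}) = {\bf 0}$, and then consider the homotopy $H: \Theta^{(k+1)}_n \times [0,1] \to \Theta^{(k+1)}_n$ given by $H({\bf x}, t) = F_{tq}({\bf x})$. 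At $t = 1$ this is $F_q$, and at $t = 0$ it equals $\Psi_{(\widehat{\mathcal{A}}, \widehat{\bf d})}$, whose only zero is ${\bf 0}$ by the $\mathbb{EHR}_0$ property combined with Lemma \ref{solution}.

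The crucial step is to prove that the zero set $\Delta = \{{\bf x} : H({\bf x}, t) = {\bf 0} \text{ for some } t \in [0,1]\}$ is bounded. I would argue by contradiction: if there is a sequence ${\bf x}^{(\ell)} \in \mathrm{SOL}(\widehat{\mathcal{A}}, \widehat{\bf d}, t_\ell q)$ with $\lambda_\ell := \|{\bf x}^{(\ell)}\|_\infty \to \infty$ and $t_\ell \in [0,1]$, then, after passing to a subsequence, ${\bf x}^{(\ell)}/\lambda_\ell \to {\bf y}$ with $\|{\bf y}\|_\infty = 1$. Dividing the tensor equation (\ref{EHTCP1}) by $\lambda_\ell^{m-1}$ (so the $t_\ell q$ term vanishes in the limit) and invoking Lemma \ref{solution} to upgrade (\ref{EHTCP2}) to $x_0^{(\ell)} \wedge x_i^{(\ell)} = 0$ for all $i \in [k]$ (a relation that is positively homogeneous and hence survives the normalization), one finds that the limit ${\bf y} = (y_0, y_1, \ldots, y_k)$ satisfies
\begin{equation*}
\mathcal{A}_0 y_0^{m-1} \;=\; \sum_{j=1}^{k} \mathcal{A}_j y_j^{m-1}, \qquad y_0 \wedge y_i = 0 \text{ for all } i \in [k].
\end{equation*}
The $\mathbb{EHR}_0$ hypothesis then forces ${\bf y} = {\bf 0}$, contradicting $\|{\bf y}\|_\infty = 1$.

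Once $\Delta$ is bounded, pick any bounded open $\Omega \subset \Theta^{(k+1)}_n$ containing $\Delta \cup \{{\bf 0}\}$. The homotopy invariance property, together with Lemma \ref{degsame}, yields
\begin{equation*}
\deg(F_q, \Omega, {\bf 0}) \;=\; \deg(\Psi_{(\widehat{\mathcal{A}}, \widehat{\bf d})}, \Omega, {\bf 0}) \;=\; \deg(\Psi_{(\widehat{\mathcal{A}}, \widehat{\bf d})}, {\bf 0}) \;=\; \deg(\widehat{\mathcal{A}}) \;\neq\; 0,
\end{equation*}
so $F_q({\bf x}) = {\bf 0}$ has a solution in $\Omega$, proving that $\mathrm{SOL}(\widehat{\mathcal{A}}, \widehat{\bf d}, q)$ is nonempty. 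Boundedness of $\mathrm{SOL}(\widehat{\mathcal{A}}, \widehat{\bf d}, q)$ is already provided by Theorem \ref{bounded}, while closedness follows at once from the continuity of $F_q$; together these give compactness.

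The main obstacle I anticipate is the uniform boundedness of $\Delta$: one must simultaneously control the scaling of the polynomial equation (where the correct weight is $\lambda_\ell^{m-1}$) and the piecewise linear $\min$-conditions (which are only positively homogeneous of degree one, and moreover involve the fixed positive vectors $d_j$ that get washed out by the normalization). The clean way through this, as sketched above, is to first replace (\ref{EHTCP2}) by the equivalent conditions $x_0 \wedge x_i = 0$ of Lemma \ref{solution}, which no longer involve $\widehat{\bf d}$ and therefore behave well under scaling.
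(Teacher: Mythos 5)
Your proposal is correct and follows essentially the same route as the paper: the identical homotopy $t \mapsto \Psi_{(\widehat{\mathcal{A}},\widehat{\bf d})}({\bf x}) - (0,\ldots,0,tq)$, homotopy invariance combined with Lemma \ref{degsame} to transfer $\deg(\widehat{\mathcal{A}}) \neq 0$, and Theorem \ref{bounded} plus closedness for compactness. The only difference is that you spell out the normalization/subsequence argument for boundedness of the zero set, which the paper leaves to the reader by pointing to Lemma \ref{degsame} and the TCP case; your version of that step is sound.
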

\begin{proof}
Let $\widehat{\mathcal{A}} = (\mathcal{A}_{0}, \mathcal{A}_{1},...,\mathcal{A}_{k}) \in \Theta^{(k+1)}_{(m,n)},~~ \widehat{\bf d} = (d_{1},d_{2},...,d_{k-1}) \in \Theta^{(k-1)}_{n,++}~\text{and}~{q} \in {\mathbb R}^{n}$. In view of Theorem \ref{bounded}, it is enough to show that ${\rm SOL}(\widehat{\mathcal{A}}, \widehat{\bf d},{q})$ is nonempty, as ${\rm SOL}(\widehat{\mathcal{A}}, \widehat{\bf d},{q})$ is always a closed set. Let us define a homotopy $\Phi : \Theta^{(k+1)}_{n} \times [0,1] \to \Theta^{(k+1)}_{n}$  as
\begin{equation*}
\Phi({\bf x},t) = \begin{bmatrix}
x_{0} \wedge x_{1}\\
(d_{1} - x_{1})\wedge x_{2}\\
\vdots\\
(d_{k-1} - x_{k-1}) \wedge x_{k}\\
 \mathcal{A}_{0}x_{0}^{m-1} - {\displaystyle\sum_{j = 1}^{k} \mathcal{A}_{j}{x_{j}^{m-1}}} - tq
\end{bmatrix}
\end{equation*}  
Let $\widehat{\bf q} := (0,0,...0,q) \in  \Theta^{(k+1)}_{n}.$ Then,
$$\Phi({\bf x}, 0) = \Psi_{(\widehat{\mathcal{A}}, {\widehat{\bf d}})}({\bf x})~\text{and}~\Phi({\bf x}, 1) = \Psi_{(\widehat{\mathcal{A}}, {\widehat{\bf d}})}({\bf x}) - {\widehat{\bf q}}. $$
By using the similar argument as in the proof of Lemma \ref{degsame}
, we can show easily that the zero set of the homotopy $\Phi({\bf x},t)$, say $X$, is bounded. Hence by the property 3 (homotopy invariance), we get $\deg(\Psi_{(\widehat{\mathcal{A}}, {\widehat{\bf d}})},  \Omega, {\bf 0}) = \deg(\Psi_{(\widehat{\mathcal{A}}, {\widehat{\bf d}})} - {\widehat{\bf q}}, \Omega, {\bf 0}),$ for any bounded open set $\Omega$ containing $X$ in $\Theta^{(k+1)}_{n}$. Using Lemma \ref{degsame} and $\deg(\widehat{\mathcal{A}}) \neq 0$, we get $\deg(\Psi_{(\widehat{\mathcal{A}}, {\widehat{\bf d}})} - {\widehat{\bf q}}, \Omega, {\bf 0}) \neq 0$ implying that  ${\rm SOL}(\widehat{\mathcal{A}}, \widehat{\bf d},{q})$ is nonempty.
\end{proof}

The following theorem gives the nonemptiness and compactness of ${\rm SOL}(\widehat{\mathcal{A}},\widehat{{\bf d}},{q})$ when $\widehat{\mathcal{A}}$ is an $\mathbb{EHE}$ tensor.
\begin{theorem}\label{E imply nonempty}
Let $\widehat{\mathcal{A}} = (\mathcal{A}_{0}, \mathcal{A}_{1},...,\mathcal{A}_{k}) \in \Theta^{(k+1)}_{(m,n)},$ where $m$ is even. If $\widehat{\mathcal{A}}$ is an $\mathbb{EHE}$ tensor, then ${\rm SOL}(\widehat{\mathcal{A}},\widehat{{\bf d}},{q})$ is  nonempty and compact for each $\widehat{\bf d} \in \Theta^{(k-1)}_{n,++}$ and ${q} \in {\mathbb R}^{n}$. 
\end{theorem}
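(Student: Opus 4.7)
Compactness is immediate: by Proposition~\ref{subset}(i) we have $\mathbb{EHE}\subseteq\mathbb{EHR}_{0}$, so Theorem~\ref{bounded} gives that $\mathrm{SOL}(\widehat{\mathcal{A}},\widehat{\mathbf{d}},q)$ is bounded, and it is closed as the zero set of the continuous map $\mathbf{x}\mapsto\Psi_{(\widehat{\mathcal{A}},\widehat{\mathbf{d}})}(\mathbf{x})-\widehat{\mathbf{q}}$. Hence the task reduces to showing non-emptiness, and by Theorem~\ref{nonempty solution} it is enough to verify that $\deg(\widehat{\mathcal{A}})\neq 0$.

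To compute this degree I would use a homotopy comparing $\widehat{\mathcal{A}}$ with the reference tensor $\widehat{\mathcal{I}}^{*}=(\mathcal{I},\mathcal{I},\ldots,\mathcal{I})\in\Theta^{(k+1)}_{(m,n)}$, namely the convex combination $\widehat{\mathcal{A}}^{t}=(1-t)\widehat{\mathcal{I}}^{*}+t\widehat{\mathcal{A}}$ for $t\in[0,1]$. A short direct check shows that $\widehat{\mathcal{I}}^{*}$ is itself $\mathbb{EHE}$ (and hence $\mathbb{EHR}_{0}$): if $(x_{0})_{\ell}>0$ then $x_{0}\ast x_{i}\le\mathbf{0}$ together with $x_{i}\ge\mathbf{0}$ forces $(x_{i})_{\ell}=0$ for $i\geq 1$, so the $\ell$-th component of $x_{0}^{[m-1]}-\sum_{j=1}^{k}x_{j}^{[m-1]}$ equals $(x_{0})_{\ell}^{m-1}>0$, contradicting the equation; the case $(x_{0})_{\ell}<0$ is ruled out analogously using that $m$ even makes $m-1$ odd, hence sign-preserving. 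For the degree computation, the same parity observation says that $y\mapsto y^{[m-1]}$ is an odd homeomorphism of $\mathbb{R}^{n}$; under the change of variable $y_{i}=x_{i}^{[m-1]}$ the map $\Psi_{\widehat{\mathcal{I}}^{*}}$ is topologically equivalent to the linear EHLCP analogue $\mathbf{x}\mapsto(x_{0}\wedge x_{1},\ldots,x_{0}\wedge x_{k},\,x_{0}-\sum_{j=1}^{k}x_{j})$, whose unique zero at $\mathbf{0}$ is readily seen (by a piecewise-linear Jacobian inspection) to carry local degree $+1$; hence $\deg(\widehat{\mathcal{I}}^{*})=1$.

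The remaining ingredient, which I expect to be the main obstacle, is verifying that the zero set of $\Psi_{\widehat{\mathcal{A}}^{t}}$ equals $\{\mathbf{0}\}$ for every $t\in[0,1]$, so that homotopy invariance will yield $\deg(\widehat{\mathcal{A}})=\deg(\widehat{\mathcal{I}}^{*})=1$. Any such zero $\mathbf{x}$ satisfies $x_{0}\wedge x_{i}=\mathbf{0}$ for all $i\in[k]$ together with the combined equation $(1-t)\bigl(x_{0}^{[m-1]}-\sum_{j=1}^{k}x_{j}^{[m-1]}\bigr)+t\bigl(\mathcal{A}_{0}x_{0}^{m-1}-\sum_{j=1}^{k}\mathcal{A}_{j}x_{j}^{m-1}\bigr)=\mathbf{0}$. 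Pairing this with $x_{0}$ annihilates the cross terms $\langle x_{0},x_{j}^{[m-1]}\rangle$ because $x_{0}\ast x_{j}=\mathbf{0}$, leaving $(1-t)\sum_{\ell}(x_{0})_{\ell}^{m}+t\,\langle x_{0},\mathcal{A}_{0}x_{0}^{m-1}-\sum_{j=1}^{k}\mathcal{A}_{j}x_{j}^{m-1}\rangle=0$; the difficulty is that the second pairing has a priori indeterminate sign. I would close the argument by a component-wise split on the support of $x_{0}$, applying the $\mathbb{EHE}$ property of $\widehat{\mathcal{A}}$ to the restricted tuple to force $x_{0}=\mathbf{0}$, and then re-applying $\mathbb{EHE}$ of $\widehat{\mathcal{A}}$ to the residual equation $\sum_{j=1}^{k}[(1-t)\mathcal{I}+t\mathcal{A}_{j}]x_{j}^{m-1}=\mathbf{0}$ with $x_{j}\ge\mathbf{0}$ to conclude $x_{j}=\mathbf{0}$ for each $j\geq 1$ (again exploiting evenness of $m$ to reconstruct signs of $x_{j}$ from $x_{j}^{[m-1]}$). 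Once this is in hand, $\deg(\widehat{\mathcal{A}})=1\neq 0$, and Theorem~\ref{nonempty solution} delivers the desired nonempty, compact solution set.
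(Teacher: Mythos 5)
Your opening reduction is sound and matches the paper: compactness follows from Proposition~\ref{subset}(i) and Theorem~\ref{bounded} together with closedness, and nonemptiness reduces via Theorem~\ref{nonempty solution} to showing $\deg(\widehat{\mathcal{A}})\neq 0$. The gap is in the degree computation, and it sits exactly where you yourself flag ``the main obstacle.'' Deforming the tensor tuple linearly, $\widehat{\mathcal{A}}^{t}=(1-t)\widehat{\mathcal{I}}^{*}+t\widehat{\mathcal{A}}$, produces at intermediate $t$ the equation $(1-t)\left(x_{0}^{[m-1]}-\sum_{j=1}^{k}x_{j}^{[m-1]}\right)+t\left(\mathcal{A}_{0}x_{0}^{m-1}-\sum_{j=1}^{k}\mathcal{A}_{j}x_{j}^{m-1}\right)=0$, which is \emph{not} an instance of the hypothesis in Definition~\ref{special tensors}(\ref{E}): the $\mathbb{EHE}$ property is a single global implication about the exact system $\mathcal{A}_{0}x_{0}^{m-1}=\sum_{j=1}^{k}\mathcal{A}_{j}x_{j}^{m-1}$ and carries no component-wise content that could be applied to a ``restricted tuple'' on the support of $x_{0}$, nor to your residual equation $\sum_{j=1}^{k}[(1-t)\mathcal{I}+t\mathcal{A}_{j}]x_{j}^{m-1}=0$, which again mixes identity and $\mathcal{A}_{j}$ terms. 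Classes of this type are not closed under convex combination with the identity tuple, so there is no reason the zero set of $\Psi_{\widehat{\mathcal{A}}^{t}}$ remains $\{\mathbf{0}\}$ along your path; your inner-product computation stalls for the reason you state (the pairing $\langle x_{0},\mathcal{A}_{0}x_{0}^{m-1}-\sum_{j}\mathcal{A}_{j}x_{j}^{m-1}\rangle$ has no sign), and the sketched ``component-wise split'' does not close it. The computation of $\deg(\widehat{\mathcal{I}}^{*})$ via the change of variables $y_{i}=x_{i}^{[m-1]}$ is also only asserted, but that is secondary.

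The paper's proof sidesteps this by homotoping the \emph{complementarity rows} rather than the tensors: it connects $\Psi_{\widehat{\mathcal{A}}}$ to the decoupled map $\mathbf{x}\mapsto(x_{1},\dots,x_{k},\mathcal{A}_{0}x_{0}^{m-1})$, replacing $x_{0}\wedge x_{i}$ by $t\,x_{i}+(1-t)(x_{0}\wedge x_{i})$ and $\sum_{j}\mathcal{A}_{j}x_{j}^{m-1}$ by $(1-t)\sum_{j}\mathcal{A}_{j}x_{j}^{m-1}$ while keeping $\mathcal{A}_{0}x_{0}^{m-1}$ intact. At every intermediate $t$ the zero condition becomes $x_{i}\geq 0$, $x_{0}\ast x_{i}\leq 0$ and $\mathcal{A}_{0}(\beta' x_{0})^{m-1}=\sum_{j}\mathcal{A}_{j}x_{j}^{m-1}$ with $\beta'>0$, i.e.\ literally the $\mathbb{EHE}$ hypothesis applied to $(\beta' x_{0},x_{1},\dots,x_{k})$, so the zero set is trivial for free. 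The endpoint degree is then evaluated by the Cartesian product property as $\deg(\phi,U,0)\prod_{i}\deg(I,U_{i},0)$ with $\phi(x_{0})=\mathcal{A}_{0}x_{0}^{m-1}$, and the evenness of $m$ is what guarantees $\deg(\phi,U,0)\neq 0$ (via \cite[Lemma 3.17]{Yadav30082024}); this is the essential use of the parity hypothesis, consistent with the counterexample following Theorem~\ref{E imply nonempty}. If you want to salvage your route, you would need to prove separately that every tuple on your segment is $\mathbb{EHR}_{0}$, which does not follow from $\widehat{\mathcal{A}}$ being $\mathbb{EHE}$.
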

\begin{proof}
Let $m$ be even and $\widehat{\mathcal{A}} = (\mathcal{A}_{0}, \mathcal{A}_{1},...,\mathcal{A}_{k}) \in \Theta^{(k+1)}_{(m,n)}$ be an $\mathbb{EHE}$ tensor. From  Proposition \ref{subset} (i), it follows that $\widehat{\mathcal{A}}$ is an  $\mathbb{EHR}_{0}$ tensor. In view of Theorem \ref{nonempty solution}, it is sufficient to show that $\deg(\widehat{\mathcal{A}}) \neq 0.$ Let us define a homotopy $\Phi : \Theta^{(k+1)}_{n} \times [0,1] \to \Theta^{(k+1)}_{n}$ as
$$\Phi({\bf x},t)=~ t \begin{bmatrix}
x_{1}\\
x_{2}\\
\vdots\\
x_{k}\\
\mathcal{A}_{0}x_{0}^{m-1}
\end{bmatrix} + (1-t) \begin{bmatrix}
x_{0} \wedge x_{1}\\
x_{0} \wedge x_{2}\\
\vdots\\
x_{0} \wedge x_{k}\\
\mathcal{A}_{0}x_{0}^{m-1} - {\displaystyle\sum_{j = 1}^{k} \mathcal{A}_{j}{x_{j}^{m-1}}}
\end{bmatrix},$$
for any ${\bf x} = (x_{0},x_{1},...,x_{k}) \in \Theta^{(k+1)}_{n}.$ For $t = 0$ and $t =1$, we have  
$$\Phi({\bf x},{0}) = \Psi_{\widehat{\mathcal{A}}}({\bf x})~\text{and}~
\Phi({\bf x},{1}) = \begin{bmatrix}
x_{1}\\
x_{2}\\
\vdots\\
x_{k}\\
\mathcal{A}_{0}x_{0}^{m-1}
\end{bmatrix}.$$
We first show that the set $X = \{{\bf x} \in \Theta^{(k+1)}_{n}: \Phi({\bf x},{t}) = {\bf 0}~\text{for some}~ t \in [0,1]\}$ contains only the zero vector.
To show this, we consider the following cases:

{Case 1.} When $t =0$, we have $\Phi({\bf x},{0}) = \Psi_{\widehat{\mathcal{A}}}({\bf x})$. As $\widehat{\mathcal{A}}$  is an $\mathbb{EHR}_{0}$ tensor, we get $\Psi_{\widehat{\mathcal{A}}}({\bf x}) = {\bf 0} \iff {\bf x} = {\bf 0}.$ Also, for $t =1$, $\Phi({\bf x},{1}) = {\bf 0}$ implies $x_{i} = 0,~\text{for all}~i \in [k]$ and $\mathcal{A}_{0}x_{0}^{m-1} = 0.$ As $\widehat{\mathcal{A}}$ is an  $\mathbb{EHE}$ tensor, this gives ${\bf x} = {\bf 0}.$ Hence, we have $\Phi({\bf x},{1}) = {\bf 0} \iff {\bf x} = {\bf 0}$.

{Case 2.} Let $t \in (0,1).$ Then
\allowdisplaybreaks
\begin{eqnarray}\label{rows}
&\Phi({\bf x},t) = {\bf 0} \notag\\
\implies & \begin{bmatrix}
x_{0} \wedge x_{1}\\
x_{0} \wedge x_{2}\\
\vdots\\
x_{0} \wedge x_{k}\\
\mathcal{A}_{0}x_{0}^{m-1} - {\displaystyle\sum_{j = 1}^{k} \mathcal{A}_{j}{x_{j}^{m-1}}}
\end{bmatrix} = - {\beta} {\begin{bmatrix}
x_{1}\\
x_{2}\\
\vdots\\
x_{k}\\
\mathcal{A}_{0}x_{0}^{m-1}
\end{bmatrix}},
\end{eqnarray} 
 where ${\beta} = \dfrac{t}{1-t} > {0}$. 
 From the first $k$-rows of Eq.(\ref{rows}), we have
 \allowdisplaybreaks 
 \begin{eqnarray}\label{nonpositive}
& x_{0} \wedge x_{i} = -\beta x_{i}, \forall~ i \in [k] \notag\\
\implies & (x_{0} + \beta x_{i}) \wedge ((1+ \beta) x_{i}) = {0},\forall~ i \in [k] \notag \\
\implies & x_{i} \geq {0}~\text{and}~x_{0} \ast x_{i} = - {\beta} x_{i}^{[2]},\forall~ i \in [k] \notag \\
\implies & x_{i} \geq {0}~\text{and}~ x_{0} \ast x_{i} \leq {0},\forall~ i \in [k].
 \end{eqnarray}
From the last row of Eq.(\ref{rows}), we have
 \allowdisplaybreaks 
\begin{eqnarray}\label{last row}
&(1 + \beta)\mathcal{A}_{0}x_{0}^{m-1} = {\displaystyle\sum_{j = 1}^{k} \mathcal{A}_{j}{x_{j}^{m-1}}} \notag \\
\implies  & \mathcal{A}_{0}({\beta}'x_{0})^{m-1} = {\displaystyle\sum_{j = 1}^{k} \mathcal{A}_{j}{x_{j}^{m-1}}} ,
\end{eqnarray}
where ${\beta}^{'} = (1 + \beta)^{\frac{1}{m-1}}$. As $m$ is even, from Eqs. (\ref{nonpositive}) and (\ref{last row}), we have
\begin{equation*}
\mathcal{A}_{0}({\beta}'x_{0})^{m-1} = {\displaystyle\sum_{j = 1}^{k} \mathcal{A}_{j}{x_{j}^{m-1}}},~x_{i}\geq {0}~\text{and}~({\beta}^{'} x_{0}) \ast x_{i} \leq {0},\forall~i \in [k].
\end{equation*}
As $\widehat{\mathcal{A}}$ is an $\mathbb{EHE}$ tensor. From the above equation, we get ${\bf x} = {\bf 0}.$ Hence the set $X$ contains only the zero vector. Let $\Omega$ be any bounded open set containing $X$. Then by the property 3 (homotopy invariance), we get
\begin{equation}\label{EE1}
\deg({\widehat{\mathcal{A}}}) = \deg(\Phi(\cdot, 1), \Omega, {\bf 0}).
\end{equation}
Let $\phi({x_0}) = {\mathcal{A}_{0}}{x_{0}^{m-1}}$ and ${U}_{i}, 1 \leq i \leq k+1$ be arbitrary bounded open sets containing zero such that $\Omega = \displaystyle\prod_{i \in [k+1]}{U}_{i}$. By the Cartesian product property of degree (see \cite[Proposition 2.1.3]{MR1955649}), we have
\begin{equation}\label{EE2}
\deg(\Phi(\cdot, 1), \Omega, {\bf 0}) = \deg(\phi, {U}_{k+1}, {0}) \displaystyle\prod_{i \in [k]} \deg(I, U_{i}, {0}).
\end{equation}
As $\widehat{\mathcal{A}}$ is an $\mathbb{EHE}$ tensor, $\phi({x_0}) = {0} \iff  {{x_0} =0}.$ Since $m$ is even, from \cite[Lemma 3.17]{Yadav30082024}, we have $\deg(\phi, U_{k+1}, {0}) \neq 0$.  From Eqs. (\ref{EE1}) and (\ref{EE2}), we get
\begin{equation*}
\deg(\widehat{\mathcal{A}}) = \deg(\Phi(\cdot,1),\Omega,{\bf 0}) \neq 0.
\end{equation*} 
Hence  ${\rm SOL}(\widehat{\mathcal{A}},\widehat{{\bf d}},{q})$ is  nonempty and compact for each $\widehat{\bf d} \in \Theta^{(k-1)}_{n,++}$ and ${q} \in {\mathbb R}^{n}$. 
\end{proof}

The following example illustrates that the Theorem \ref{E imply nonempty} is not valid in the case of $m$ being odd.
\begin{example}\rm
Let $\widehat{\mathcal{A}} = (\mathcal{A}_{0},\mathcal{A}_{1}) \in \Theta^{(2)}_{(3,2)}$ where $\mathcal{A}_{0} = (a^{0}_{ijk}) \in \mathbb{T}(3,2)$ such that $a^{0}_{111} =1, a^{0}_{122} = 1, a^{0}_{222} = 1$ and other entries are zero, and $\mathcal{A}_{1} = (a^{1}_{ijk}) \in \mathbb{T}(3,2)$ with $a^{1}_{111} =-1, a^{1}_{222} =-1$ and other entries being zero. For any ${x},{y} \in \mathbb{R}^{2}$, we have  $\mathcal{A}_{0}{x^2} = (x_{1}^{2}+x_{2}^{2}, x_{2}^{2})^{T}$ and  $\mathcal{A}_{1}{y^2} = (-y_{1}^{2}, - y_{2}^{2})^{T}.$ Note that
\begin{equation*}
\mathcal{A}_{0}{x^2} = \mathcal{A}_{1}{y^2}, y \geq 0~\text{and}~{x} \ast {y} \leq 0 \implies (x,y)= (0,0).
\end{equation*}
 Therefore $\widehat{\mathcal{A}}$ is an $\mathbb{EHE}$ tensor, but for $q = (-1,-1)^{T}$ in $\mathbb{R}^{2}$, there does not exist any $(x,y) \in \Theta^{(2)}_{2}$ such that
 $\mathcal{A}_{0}{x^2} = {q} + \mathcal{A}_{1}{y^2}~\text{and}~{x} \wedge {y}=0.$
\end{example}

As a consequence of Proposition \ref{subset} and Theorem \ref{E imply nonempty}, we have the following results.
\begin{corollary}\label{P imply nonempty}
Let $\widehat{\mathcal{A}} = (\mathcal{A}_{0}, \mathcal{A}_{1},...,\mathcal{A}_{k}) \in \Theta^{(k+1)}_{(m,n)}$ and $m$ be even. If $\widehat{\mathcal{A}}$ is an $\mathbb{EHP}$ tensor, then ${\rm SOL}(\widehat{\mathcal{A}},\widehat{{\bf d}},{q})$ is  nonempty and compact for each $\widehat{\bf d} \in \Theta^{(k-1)}_{n,++}$ and ${q} \in {\mathbb R}^{n}$. 
\end{corollary}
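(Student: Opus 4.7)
The plan is to deduce this corollary directly from the containment $\mathbb{EHP} \subseteq \mathbb{EHE}$ recorded in Proposition \ref{subset}(i), combined with Theorem \ref{E imply nonempty}. In more detail, if $\widehat{\mathcal{A}} \in \Theta^{(k+1)}_{(m,n)}$ is an $\mathbb{EHP}$ tensor, then by Proposition \ref{subset}(i) it is in particular an $\mathbb{EHE}$ tensor. Since $m$ is assumed to be even, Theorem \ref{E imply nonempty} then yields that ${\rm SOL}(\widehat{\mathcal{A}},\widehat{{\bf d}},{q})$ is nonempty and compact for each $\widehat{\bf d} \in \Theta^{(k-1)}_{n,++}$ and ${q} \in \mathbb{R}^{n}$.

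There is really no substantive work to do beyond invoking these two previously established facts; the ``main obstacle'' here is merely to cite the correct inclusion and the correct theorem. Since the statement is labelled as a corollary (rather than a theorem or proposition) following Proposition \ref{subset} and Theorem \ref{E imply nonempty}, I expect that the authors' proof will be a one-line remark of the same type. No homotopy construction, no degree computation, and no case analysis on $m$ is needed here beyond what has already been carried out inside Theorem \ref{E imply nonempty}.
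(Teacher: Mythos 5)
Your proposal is correct and matches the paper exactly: the authors state this corollary immediately after Theorem \ref{E imply nonempty} with the remark that it is ``a consequence of Proposition \ref{subset} and Theorem \ref{E imply nonempty},'' i.e.\ the inclusion $\mathbb{EHP} \subseteq \mathbb{EHE}$ plus the theorem for even $m$. No further argument is given or needed.
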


\begin{corollary}\label{strong P imply nonempty}
Let $\widehat{\mathcal{A}} = (\mathcal{A}_{0}, \mathcal{A}_{1},...,\mathcal{A}_{k}) \in \Theta^{(k+1)}_{(m,n)}$ and $m$ be even. If $\widehat{\mathcal{A}}$ is a strong $\mathbb{EHP}$ tensor, then ${\rm SOL}(\widehat{\mathcal{A}},\widehat{{\bf d}},{q})$ is  nonempty and compact for each $\widehat{\bf d} \in \Theta^{(k-1)}_{n,++}$ and ${q} \in {\mathbb R}^{n}$. 
\end{corollary}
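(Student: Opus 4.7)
The plan is to derive this statement as a direct consequence of what has already been established, rather than proving it from scratch. The key observation is that the hypotheses here (strong $\mathbb{EHP}$, $m$ even) are strictly stronger than those of Theorem \ref{E imply nonempty} (or Corollary \ref{P imply nonempty}), so the result should follow purely from the inclusion chain recorded in Proposition \ref{subset}(i).

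First, I would invoke Proposition \ref{subset}(i), which gives the inclusion chain
\begin{equation*}
\text{strong } \mathbb{EHP} \subseteq \mathbb{EHP} \subseteq \mathbb{EHE} \subseteq \mathbb{EHR}_{0}.
\end{equation*}
Hence any strong $\mathbb{EHP}$ tensor $\widehat{\mathcal{A}}$ is in particular an $\mathbb{EHE}$ tensor. Second, since $m$ is even by hypothesis, Theorem \ref{E imply nonempty} applies directly to $\widehat{\mathcal{A}}$, yielding that ${\rm SOL}(\widehat{\mathcal{A}}, \widehat{\bf d}, q)$ is nonempty and compact for every $\widehat{\bf d} \in \Theta^{(k-1)}_{n,++}$ and every $q \in \mathbb{R}^{n}$. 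Equivalently, one may route the argument through Corollary \ref{P imply nonempty} by first noting strong $\mathbb{EHP} \subseteq \mathbb{EHP}$, but the end result is the same.

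There is essentially no main obstacle to overcome: all the analytic content (homotopy construction, boundedness of the zero set of the homotopy, the Cartesian product property of degree, and the nonvanishing of $\deg(\phi, U_{k+1}, 0)$ when $m$ is even) has already been handled in the proof of Theorem \ref{E imply nonempty}. The only thing to verify, should one wish to make the argument self-contained, is the first inclusion in Proposition \ref{subset}(i), which is immediate by specializing the defining implication of a strong $\mathbb{EHP}$ tensor to the case $\overline{x}_{i} = 0$ for all $0 \leq i \leq k$. Accordingly, I would keep the written proof to one or two lines, citing Proposition \ref{subset}(i) and Theorem \ref{E imply nonempty}.
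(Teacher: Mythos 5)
Your proposal is correct and matches the paper exactly: the paper states this corollary as an immediate consequence of Proposition \ref{subset} and Theorem \ref{E imply nonempty}, which is precisely the route you take. Your side remark that the inclusion $\text{strong }\mathbb{EHP}\subseteq\mathbb{EHP}$ follows by specializing the strong $\mathbb{EHP}$ definition to $\overline{x}_{i}=0$ is also accurate.
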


\subsection{Finiteness and Uniqueness Results}\label{Finiteness and Uniqueness}
This subsection deals with the finiteness of ${\rm SOL}(\widehat{\mathcal{A}},\widehat{{\bf d}},{q})$ with regards to $\mathbb{EHND}$ tensors and strong $\mathbb{EHND}$ tensors. Thereafter, we discuss the uniqueness of solution of the ${\rm EHTCP}(\widehat{\mathcal{A}},\widehat{{\bf d}},{q})$ with respect to a strong $\mathbb{EHP}$ tensor. We first show that the equivalence of the finiteness of  solution set of ${\rm EHTCP}(\widehat{\mathcal{A}},\widehat{\bf d},{q})$ with $\widehat{\mathcal{A}}$ being an $\mathbb{EHND}$ tensor may not hold.  
\begin{example}\rm
 Let $\widehat{\mathcal{A}} = (\mathcal{A}_{0}, \mathcal{A}_{1},\mathcal{A}_{2}) \in \Theta^{(3)}_{(3,3)}$ where $\mathcal{A}_{0} = (a^{0}_{ijk}) \in \mathbb{T}(3,3)$ such that $a^{0}_{111} = a^{0}_{122} = 1$ and other entries are zero, $\mathcal{A}_{1} = (a^{1}_{ijk}) \in \mathbb{T}(3,3)$ with
$a^{1}_{211} = a^{1}_{222} =-1$ and other entries are zero, and  $\mathcal{A}_{2} = (a^{2}_{ijk}) \in \mathbb{T}(3,3)$ having 
$a^{2}_{311} = a^{2}_{322} =1$ and other entries as zero. 
We show that (i) $\widehat{\mathcal{A}}$ is  an  $\mathbb{EHND}$ tensor, (ii) but ${\rm SOL}(\widehat{\mathcal{A}},\widehat{{\bf d}},{q})$ is not finite for some ${q} \in \mathbb{R}^{3}$ and ${d_1} \in \mathbb{R}^{3}_{++}$. For any ${\bf w} = (x,y,z) \in \Theta^{(3)}_{3},$ we have
$$\mathcal{A}_{0}{x^2} = (x_{1}^{2}+x_{2}^{2},0,0)^{T}, \mathcal{A}_{1}{y^{2}} = (0, -(y_{1}^{2}+y_{2}^{2}),0)^{T}~\text{and}~\mathcal{A}_{2}{z^2} = (0,0,z_{1}^{2}+z_{2}^{2})^{T}.$$
\begin{enumerate}
\item[\rm(i)] Let ${\bf w} = (x,y,z) \in \Theta^{(3)}_{3}$ satisfy
$$\mathcal{A}_{0}{x^2} = \mathcal{A}_{1}{y^2}+\mathcal{A}_{2}{z^2}, x \ast y =0, x \ast z=0,~\text{and}~y\ast z =0.$$
A simple calculation yields ${x}= 0, {y} = {0}, {z} = {0}$, and hence ${\bf w} = {\bf 0}.$ Thus  $\widehat{\mathcal{A}}$ is an  $\mathbb{EHND}$ tensor. 
\item[\rm(ii)] Take $q = (1,0,0)^{T} \in \mathbb{R}^{3}$ and $d_{1} = (1,1,1)^{T}$ in $\mathbb{R}^{3}_{++}$. Observe that the vector $(x,y,z) = ((\cos\theta, \sin\theta,0)^{T}, (0, 0,0)^{T}, (0,0,0)^{T}) \in \Theta^{(3)}_{3}$ satisfies
$$\mathcal{A}_{0}{x^2} = q +\mathcal{A}_{1}{y^2} + \mathcal{A}_{2}{z^2}, x \wedge y =0~\text{and}~(d_{1} - y) \wedge z = 0,$$ 
for any $0 \leq \theta \leq \frac{\pi}{2}.$ Therefore ${\rm SOL}(\widehat{\mathcal{A}},\widehat{{\bf d}},{q})$ is not finite.
\end{enumerate} 
\end{example}

\begin{example}\rm\label{not finite}
Let $\widehat{\mathcal{A}} = (\mathcal{A}_{0}, \mathcal{A}_{1}) \in  \Theta^{(2)}_{(4,2)}$ where $\mathcal{A}_{0} = (a^{0}_{ijkl}) \in \mathbb{T}(4,2)$ such that $a^{0}_{1111} =1, a^{0}_{2222} =1$ and other entries are zero, and $\mathcal{A}_{1} = (a^{1}_{ijkl}) \in \mathbb{T}(4,2)$ with $a^{1}_{1111} =1, a^{1}_{1222} =1, a^{1}_{2222} =-1, a^{1}_{2221}=-1$ and other entries are zero. We show that (i) $\widehat{\mathcal{A}}$ is not an $\mathbb{EHND}$ tensor, (ii) but ${\rm SOL}(\widehat{\mathcal{A}},\widehat{{\bf d}},{q})$ is a finite set.   For any ${\bf w} = (x, y) \in \Theta^{(2)}_{2}$, we have
$$\mathcal{A}_{0}{x^3} = (x_{1}^{3}, x_{2}^{3})^{T}~\text{and}~\mathcal{A}_{1}{y^3} = (y_{1}^{3}+y_{2}^{3},-y_{2}^{3}-y_{1}y_{2}^{2})^{T}.$$
\begin{enumerate}
\item[\rm(i)] Observe that the nonzero vector ${\bf w} = ((0,0)^{T},(1,-1)^{T}) \in \Theta^{(2)}_{2}$ satisfies
$$\mathcal{A}_{0}{x^3} = \mathcal{A}_{1}{y^3}~\text{and}~{x} \ast {y} = 0.$$
Therefore $\widehat{\mathcal{A}}$ cannot be an $\mathbb{EHND}$ tensor.
\item[\rm(ii)] We now show that ${\rm SOL}(\widehat{\mathcal{A}},\widehat{\bf d},{q})$ is a finite set. If it is empty, then we are done. Suppose that ${\rm SOL}(\widehat{\mathcal{A}},\widehat{\bf d},{q}) \neq \emptyset$ and let ${\bf w} = (x,y) \in \Theta^{(2)}_{2}$ be a solution of ${\rm EHTCP}(\widehat{\mathcal{A}},\widehat{\bf d},{q}).$ This implies that ${x}\wedge {y} = 0$ and
\begin{eqnarray}\label{ff1}
& \mathcal{A}_{0}{x^3} = {q} + \mathcal{A}_{1}{y^3}\notag\\
& \implies \begin{bmatrix}
x_{1}^{3}\\
x_{2}^{3}
\end{bmatrix} = \begin{bmatrix}
q_{1} + y_{1}^{3}+y_{2}^{3}\\
q_{2}-y_{2}^{3}-y_{1}y_{2}^{2}
\end{bmatrix}.
\end{eqnarray}
Considering ${x} \wedge {y} =0,$ we discuss the following cases:

{Case 1.} If ${x} = (0,0)^{T},$ then ${y} = (y_{1},y_{2})^{T}$ can take the following forms:

{(a)} ${y} = (0,0)^{T}.$

{(b)} ${y} = (y_{1},0)^{T}$ with $y_{1}>0$. Then Eq. (\ref{ff1}) gives $y_{1} =(-q_{1})^{1/3}.$

{(c)} ${y} = (0,y_{2})^{T}$ with $y_{2}>0$. Then Eq. (\ref{ff1}) gives $y_{2} =\{\frac{1}{2}(q_{2}-q_{1})\}^{1/3}.$

{(d)} ${y} = (y_{1},y_{2})^{T}$ with $y_{1} > 0$ and $y_{2}>0$. Then from Eq. (\ref{ff1}), we get 
$$q_{1}+y_{1}^{3}+y_{2}^{3} = 0~\text{and}~q_{2}-y_{2}^{3}-y_{1}y_{2}^{2} = 0.$$
Solving these equations, we obtain a polynomial $[(q_{1}+q_{2})+y_{1}^{3}]^{3}-[y_{1}^{3}(q_{1}+y_{1}^{3})^{2}]$ in terms of $y_{1}.$ It can be easily verified that this polynomial cannot be a zero polynomial for any ${q} \in \mathbb{R}^{2}.$ Since any nonzero polynomial in one variable has finitely many zeros, we get finitely many values of $y_{1}$ and hence $y_{2}$. 

{Case 2.} If ${x} = (x_{1},x_{2})^{T}$ is nonzero, then it can take the following forms:

{(a)} $x_{1} > 0$ and $x_{2} = 0$. From the complementarity condition, we get $y_{1} = 0$ and $y_{2} \geq 0.$ Then Eq. (\ref{ff1}) gives $x_{1}^{3} = q_{1}+y_{2}^{3}$ and $q_{2} - y_{2}^{3}  =0$. Upon solving these equations, we get $y_{2} = (q_{2})^{1/3}$ and $x_{1} = (q_{1}+q_{2})^{1/3}.$ Thus ${x} = ((q_{1}+q_{2})^{1/3},0)^{T}$ and ${y} = (0,(q_{2})^{1/3})^{T}.$

{(b)} $x_{1} = 0$ and $x_{2}>0$. This implies that $y_{1} \geq 0$ and $y_{2} =0.$ Using Eq. (\ref{ff1}) and simplifying, we get ${x} = (0,{q_{2}}^{1/3})^{T}$ and ${y} = ((-q_{1})^{1/3},0)^{T}.$ 

{(c)} $x_{1}>0$ and $x_{2}>0$. This gives ${y} = (0,0)^{T}$. From Eq. (\ref{ff1}), we get ${x} = ({q_{1}}^{1/3},{q_{2}}^{1/3})^{T}.$\\
From all of the above cases, we get that ${\rm SOL}(\widehat{\mathcal{A}},\widehat{\bf d},{q})$ is a finite set. 
\end{enumerate}
\end{example} 

Next, we prove that if $\widehat{\mathcal{A}}$ is a strong $\mathbb{EHND}$ tensor, then ${\rm SOL}(\widehat{\mathcal{A}},\widehat{{\bf d}},{q})$ is  finite  for each $\widehat{\bf d} \in \Theta^{(k-1)}_{n,++}$ and ${q} \in {\mathbb R}^{n}$. We need the following lemma to proceed further.
\begin{lemma}\label{ND11} 
Let $\widehat{\mathcal{A}} = (\mathcal{A}_{0}, \mathcal{A}_{1},...,\mathcal{A}_{k}) \in \Theta^{(k+1)}_{(m,n)}$ be a strong $\mathbb{EHND}$ tensor. Then the following statements are valid.
\begin{enumerate}
\item[\rm(i)] $m$ is even,
\item[\rm(ii)] $F_{i}(x) = \mathcal{A}_{i} {x}^{m-1}$ for each $0 \leq i \leq k $ is an injective function on $\mathbb{R}^{n},$
\item[\rm(iii)]   ${\rm SOL}(\widehat{\mathcal{A}}, \widehat{\bf{d}},{q})$ is compact for all $\widehat{{\bf d}} = (d_{1},d_{2},...,d_{k-1}) \in \Theta^{(k-1)}_{n,++}$ and $q \in \mathbb{R}^{n}$.
\end{enumerate}
\end{lemma}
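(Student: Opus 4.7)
I address the three parts in order.

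\emph{Part (i): $m$ even.} Suppose for contradiction $m$ is odd; then $m-1$ is even, so $\mathcal{A}_i(-u)^{m-1}=\mathcal{A}_i u^{m-1}$ for every $u\in\mathbb{R}^n$ and every $i$. Pick any nonzero $u$ and take $x_0=u$, $\bar{x}_0=-u$, $x_j=\bar{x}_j=0$ for $1\le j\le k$. Both sides of the defining linear relation of strong $\mathbb{EHND}$ collapse to $0$, and every cross-product $(x_i-\bar{x}_i)*(x_j-\bar{x}_j)$ with $i<j$ vanishes because $x_j-\bar{x}_j=0$ for $j\ge 1$. Strong $\mathbb{EHND}$ would then force $x_0=\bar{x}_0$, contradicting $u\neq 0$.

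\emph{Part (ii): Injectivity of $F_i$.} Fix $i\in\{0,1,\ldots,k\}$ and assume $\mathcal{A}_i x^{m-1}=\mathcal{A}_i\bar{x}^{m-1}$. Embed $(x,\bar{x})$ into the $i$-th slot by putting $x_i=x$, $\bar{x}_i=\bar{x}$, and $x_l=\bar{x}_l=0$ for $l\ne i$. The defining linear relation reduces to $\mathcal{A}_i x^{m-1}=\mathcal{A}_i\bar{x}^{m-1}$ (up to an overall sign depending on whether $i=0$), which holds by hypothesis, and the cross-product conditions are trivial since $x_l-\bar{x}_l=0$ whenever $l\ne i$. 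Strong $\mathbb{EHND}$ then yields $x=\bar{x}$.

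\emph{Part (iii): Compactness.} Closedness of ${\rm SOL}(\widehat{\mathcal{A}},\widehat{\bf d},q)$ is immediate from continuity of the defining map. For boundedness, I rely on the key observation that the complementarity condition $(d_j-x_j)\wedge x_{j+1}=0$ combined with Proposition \ref{pp1} forces $0\le x_j\le d_j$ for $1\le j\le k-1$, so only $x_0$ and $x_k$ can escape to infinity along a solution sequence. Suppose $\{{\bf x}^n\}\subseteq{\rm SOL}$ with $\|{\bf x}^n\|\to\infty$, and set $\lambda_n:=\max(\|x_0^n\|,\|x_k^n\|)\to\infty$. After passing to a subsequence, $y_0^n:=x_0^n/\lambda_n\to y_0$ and $y_k^n:=x_k^n/\lambda_n\to y_k$ with $\max(\|y_0\|,\|y_k\|)=1$, while $x_j^n/\lambda_n\to 0$ for $1\le j\le k-1$. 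Dividing the EHTCP equation by $\lambda_n^{m-1}$ and letting $n\to\infty$ yields $\mathcal{A}_0 y_0^{m-1}=\mathcal{A}_k y_k^{m-1}$, and Lemma \ref{solution} gives $y_0,y_k\ge 0$ with $y_0*y_k=0$. Padding to $\tilde{\bf y}=(y_0,0,\ldots,0,y_k)$ produces a vector satisfying $\mathcal{A}_0\tilde{y}_0^{m-1}=\sum_{j=1}^k\mathcal{A}_j\tilde{y}_j^{m-1}$ together with all cross-product conditions $\tilde{y}_i*\tilde{y}_j=0$ for $i<j$. Since strong $\mathbb{EHND}$ implies $\mathbb{EHND}$ by Proposition \ref{subset}(iii), we conclude $\tilde{\bf y}={\bf 0}$, contradicting $\max(\|y_0\|,\|y_k\|)=1$.

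The main obstacle is part (iii): the essential insight is that the middle components $x_1,\ldots,x_{k-1}$ are automatically bounded on ${\rm SOL}$, so only the endpoints can blow up, and that padding the normalized limit with zeros in the middle slots lands precisely in the $\mathbb{EHND}$ hypothesis. Without this observation the normalization argument cannot conclude, since for $k\ge 2$ the plain $\mathbb{EHND}$ property does not by itself imply $\mathbb{EHR}_{0}$, so Theorem \ref{bounded} is not directly available (cf.\ Proposition \ref{subset}(iv)).
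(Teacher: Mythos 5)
Your proposal is correct and follows essentially the same route as the paper's proof: part (i) via the sign argument with $(x_0,0,\dots,0)$ and $(-x_0,0,\dots,0)$, part (ii) by embedding the pair into a single slot and padding with zeros, and part (iii) by normalizing an unbounded solution sequence, using $0\le x_j\le d_j$ for $1\le j\le k-1$ to kill the middle components in the limit, and contradicting the $\mathbb{EHND}$ property of $\widehat{\mathcal{A}}$ (the paper normalizes by $\|{\bf w}^{(l)}\|$ rather than $\max(\|x_0^n\|,\|x_k^n\|)$, an immaterial difference). Your closing remark about why Theorem \ref{bounded} is not directly applicable for $k\ge 2$ matches the reason the paper gives a separate argument here.
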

\begin{proof}
Let $\widehat{\mathcal{A}} = (\mathcal{A}_{0}, \mathcal{A}_{1},...,\mathcal{A}_{k}) \in \Theta^{(k+1)}_{(m,n)}$ be a strong $\mathbb{EHND}$ tensor.\\
(i) Suppose that $m$ is odd. For any nonzero ${\bf x} = (x_{0},0,...,0)$ and $\bar{{\bf x}} = (\bar{x}_{0},0,...,0)$ in $\Theta^{(k+1)}_{n}$ with $\bar{x}_{0} = -x_{0}$, we have
\begin{equation*}
\mathcal{A}_{0}x_{0}^{m-1}-\mathcal{A}_{0}\bar{x}_{0}^{m-1} = 0~\text{and}~(x_{i} - \bar{x}_{i}) \ast (x_{j} - \bar{x}_{j}) = 0~\forall~0\leq i < j \leq k.
\end{equation*}
Due to $\widehat{\mathcal{A}}$ being a strong $\mathbb{EHND}$ tensor, we get ${\bf x} = \bar{\bf x}$. This implies that $x_{0} = {0}$, leading to a contradiction. Hence $m$ must be even.\\
(ii) It is enough to show that $F_{0}(x) = \mathcal{A}_{0} {x}^{m-1}$ is an injective function on $\mathbb{R}^{n}.$ Suppose that $F_{0}(x_{0}) = F_{0}(\bar{x}_{0})$ and $x_{i} = 0 = \bar{x}_{i}$ for all $1 \leq i \leq k$. Then we have
\begin{equation*}
\mathcal{A}_{0}x_{0}^{m-1}-\mathcal{A}_{0}\bar{x}_{0}^{m-1}=0~\text{and}~(x_{i}-\bar{x}_{i})\ast(x_{j}-\bar{x}_{j}) = 0~\forall~0\leq i < j \leq k. 
\end{equation*}
As $\widehat{\mathcal{A}}$ is strong $\mathbb{EHND}$, we get $x_{0} = \bar{x}_{0}$ and hence $F_{0}$ is an injective function. This completes the proof.\\
(iii) As ${\rm SOL}(\widehat{\mathcal{A}},\widehat{{\bf d}},{q})$ is always a closed set, it suffices to show that it is bounded. To the contrary, assume that it is unbounded. Let $\{{\bf w}^{(l)}\}_{l =1}^\infty = \{(w_{0}^{(l)}, {w_{1}^{(l)}},...,{w_{k}^{(l)}})\}_{l=1}^\infty$ be an unbounded sequence in ${\rm SOL}(\widehat{\mathcal{A}},\widehat{{\bf d}},{q})$ for some ${q} \in \mathbb{R}^{n}$ and $\widehat{{\bf d}} = (d_{1},d_{2},...,d_{k-1}) \in \Theta^{(k-1)}_{n,++}.$ So there exists a monotonically increasing subsequence of $\{{\bf w}^{(l)}\}_{l =1}^\infty$ diverging to infinity. Assume (without loss of generality) that $\{{\bf w}^{(l)}\}_{l =1}^\infty$  is itself a monotonically increasing subsequence. Since $\{{\bf w}^{(l)}\}_{l =1}^\infty$  is in ${\rm SOL}(\widehat{\mathcal{A}},\widehat{{\bf d}},{q}),$ we have
\begin{eqnarray*}\label{ff45}
\allowdisplaybreaks
& \mathcal{A}_{0}({w_{0}^{(l)}})^{m-1} = {q} + {\displaystyle\sum_{j = 1}^{k} \mathcal{A}_{j}({w_{j}^{(l)})^{m-1}}}, \notag \\
\allowdisplaybreaks
& {w_{0}^{(l)}}\wedge {w_{1}^{(l)}} = 0~\text{and}~(d_{j} - {w_{j}^{(l)}}) \wedge {w_{j+1}^{(l)}} = 0~\forall~j \in [k-1] . 
\end{eqnarray*}
As $\{{\bf w}^{(l)}\}_{l =1}^\infty \in {\rm SOL}(\widehat{\mathcal{A}},\widehat{{\bf d}},{q})$,  Lemma \ref{solution} gives
\begin{equation}\label{fdfd1}
\mathcal{A}_{0}({w_{0}^{(l)}})^{m-1} = {q} + {\sum_{j = 1}^{k} \mathcal{A}_{j}({w_{j}^{(l)})^{m-1}}}~\text{and}~{w_{0}^{(l)}}\wedge {w_{j}^{(l)}} = 0~\forall~j \in [k]. 
\end{equation}
Since $\|{\bf w}^{(l)}\| \to \infty$ as $l \to \infty$, $\dfrac{{\bf w}^{(l)}}{\|{\bf w}^{(l)}\| }$ is a unit vector for all large $l$. This implies that $\dfrac{{\bf w}^{(l)}}{\|{\bf w}^{(l)}\| }$  converges to some ${\bf z} = (z_{0},z_{1},...,z_{k}) \in \Theta^{(k+1)}_{n}$ with $\|{\bf z}\| =1.$ Moreover, observe that $(d_{j} - {w_{j}^{(l)}}) \geq {0}~\forall~j \in [k-1].$ So $0 \leq {w_{j}^{(l)}} \leq d_{j}~\forall~j\in[k-1].$ Hence, we have
\begin{equation}\label{fff46}
{z_{j}} = \lim_{l \to \infty} \dfrac{{w_{j}^{(l)}}}{\|{\bf w}^{(l)}\|} =0~\forall~j\in[k-1]. 
\end{equation}
Dividing Eq. (\ref{fdfd1}) by ${\|{\bf w}^{(l)}\|}$ and taking limit $l \to \infty$, we get
\begin{equation*}
\mathcal{A}_{0}{z_{0}^{m-1}} = {\sum_{j = 1}^{k} \mathcal{A}_{j}{z_{j}^{m-1}}}~\text{and}~{z_{0}} \wedge z_{j} = 0~\forall~j \in [k].
\end{equation*}
This implies that $z_{0} \ast z_{k} = 0.$ Due to Eq. (\ref{fff46}), ${\bf z}$ takes the form ${\bf z} = (z_{0},0,...,0,z_{k})$ with $z_{i} \ast z_{j} =0~\forall~0 \leq i < j \leq k.$ Therefore, we have
\begin{equation}\label{qq1}
\mathcal{A}_{0}{z_{0}^{m-1}} = {\sum_{j = 1}^{k} \mathcal{A}_{j}{z_{j}^{m-1}}}~\text{and}~z_{i} \ast z_{j} =0~\forall~0 \leq i < j \leq k.
\end{equation}
Note that $\|{\bf z}\| =1.$ So a nonzero vector ${\bf z}$ satisfies Eq. (\ref{qq1}), which implies that $\widehat{\mathcal{A}}$ cannot be an $\mathbb{EHND}$ tensor. This contradicts our assumption of $\widehat{\mathcal{A}}$ being a strong $\mathbb{EHND}$ tensor. Hence  ${\rm SOL}(\widehat{\mathcal{A}}, \widehat{\bf{d}},{q})$ is compact for all $\widehat{{\bf d}} = (d_{1},d_{2},...,d_{k-1}) \in \Theta^{(k-1)}_{n,++}$ and $q \in \mathbb{R}^{n}$.   
\end{proof}
We now state our result related to the finiteness of the ${\rm SOL}(\widehat{\mathcal{A}},\widehat{{\bf d}},{q})$. We skip the proof as it follows along the similar lines as in \cite[Theorem 2]{YADAV2024107112}.

\begin{theorem}\label{strong ND imply finiteness}
Let $\widehat{\mathcal{A}} = (\mathcal{A}_{0}, \mathcal{A}_{1},...,\mathcal{A}_{k}) \in \Theta^{(k+1)}_{(m,n)}$. If $\widehat{\mathcal{A}}$ is a strong $\mathbb{EHND}$ tensor, then ${\rm SOL}(\widehat{\mathcal{A}},\widehat{{\bf d}},{q})$ is  finite  for each $\widehat{\bf d} \in \Theta^{(k-1)}_{n,++}$ and ${q} \in {\mathbb R}^{n}$.
\end{theorem}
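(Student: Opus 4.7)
I would argue by contradiction via a pigeonhole principle on the ``sign patterns'' of solutions. Suppose ${\rm SOL}(\widehat{\mathcal A}, \widehat{\bf d}, q)$ contains infinitely many distinct elements for some $\widehat{\bf d} \in \Theta^{(k-1)}_{n,++}$ and some $q \in \mathbb{R}^{n}$. The plan is to exhibit two distinct solutions whose component-wise differences have support structure forcing them to coincide under the strong $\mathbb{EHND}$ hypothesis, which is the desired contradiction. Lemma \ref{ND11} will be available in the background (in particular, $m$ is even and each $\mathcal{A}_i x^{m-1}$ is injective), but the argument itself is purely combinatorial.

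First I would associate to each ${\bf w} = (w_0, \ldots, w_k) \in {\rm SOL}(\widehat{\mathcal A}, \widehat{\bf d}, q)$ a finite combinatorial label. By Lemma \ref{solution} together with the definition of the EHTCP, at every coordinate $r \in [n]$ each of the $k$ complementarity conditions holds via at least one of its two branches: either $(w_0)_r = 0$ or $(w_1)_r = 0$, and for $j \in [k-1]$ either $(w_j)_r = (d_j)_r$ or $(w_{j+1})_r = 0$. Selecting a canonical branch at each pair $(r,j)$ (e.g., the lexicographically first branch that holds) produces a pattern $\sigma({\bf w}) \in \{0,1,\ldots,k\}^{n \times k}$ that records, at each coordinate $r$, which variable $w_i$ is ``pinned'' to a specific value (namely $0$ or $(d_j)_r$) by each constraint. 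Since there are only finitely many such patterns, pigeonhole produces two distinct solutions ${\bf w}^{(1)} \ne {\bf w}^{(2)}$ satisfying $\sigma({\bf w}^{(1)}) = \sigma({\bf w}^{(2)})$.

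The combinatorial heart of the argument is the claim that, at each coordinate $r$, the $k$ constraints pin $k$ \emph{distinct} variables among $\{w_0, w_1, \ldots, w_k\}$. Indeed, for $1 \le i \le k-1$ the only two constraints that could pin $w_i$ at coordinate $r$ would assign it the incompatible values $0$ and $(d_i)_r > 0$, so at most one of them does; and $w_0, w_k$ each admit only a single candidate constraint. Thus the $k$ pinnings use $k$ distinct variables, leaving exactly one free index $i^{\ast}(r)$ per coordinate; the remaining $k$ variables have equal $r$-th coordinates in ${\bf w}^{(1)}$ and ${\bf w}^{(2)}$. This gives the support-disjointness
\begin{equation*}
(w_i^{(1)} - w_i^{(2)}) \ast (w_j^{(1)} - w_j^{(2)}) = 0 \quad \text{for all } 0 \le i < j \le k.
\end{equation*}

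Combined with the identity
\begin{equation*}
\mathcal{A}_0 (w_0^{(1)})^{m-1} - \mathcal{A}_0 (w_0^{(2)})^{m-1} = \sum_{j=1}^{k} \bigl(\mathcal{A}_j (w_j^{(1)})^{m-1} - \mathcal{A}_j (w_j^{(2)})^{m-1}\bigr),
\end{equation*}
the strong $\mathbb{EHND}$ hypothesis forces ${\bf w}^{(1)} = {\bf w}^{(2)}$, contradicting distinctness. The main obstacle is the bookkeeping for the pinning assignment — fixing a canonical branch and verifying the no-collision property that rests on the strict positivity of the $d_j$'s — because once the support-disjointness above is in hand, strong $\mathbb{EHND}$ closes the argument in one line.
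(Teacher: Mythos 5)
Your argument is correct, and it is worth noting that the paper does not actually write out a proof of Theorem \ref{strong ND imply finiteness} at all --- it defers to \cite[Theorem 2]{YADAV2024107112} and sets the stage with Lemma \ref{ND11}(iii), whose compactness conclusion signals the intended route: show that ${\rm SOL}(\widehat{\mathcal{A}},\widehat{\bf d},q)$ is compact and that every solution is isolated (the isolation step being a local version of your same-pattern-implies-equality argument, applied to a convergent sequence of solutions), and then conclude finiteness because a compact set of isolated points is finite. Your proof replaces this topological packaging with a purely global pigeonhole on branch patterns: you never invoke compactness, and in exchange you get an explicit a priori bound (at most $2^{nk}$ patterns, hence at most $2^{nk}$ solutions), which the compactness-plus-isolation route does not provide. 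The one point that genuinely needed care --- that at each coordinate $r$ the $k$ selected branches pin $k$ \emph{distinct} variables, so that two solutions sharing a pattern can differ in at most one of the $k+1$ components at each coordinate --- you handle correctly: the only possible collision is a variable $w_i$ ($1\le i\le k-1$) being pinned simultaneously to $0$ by the $(i-1)$-th constraint and to $(d_i)_r>0$ by the $i$-th, which is impossible precisely because $d_i\in\mathbb{R}^n_{++}$ and because your canonical selection only ever chooses a branch that actually holds. From there the support-disjointness of the differences plus the subtracted tensor equation feeds directly into the strong $\mathbb{EHND}$ definition, so the argument is complete.
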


\begin{example}\rm
The converse of the Theorem \ref{strong ND imply finiteness} does not hold (see Example \ref{not finite}).
\end{example}

In the following, we address the uniqueness of the solution of the ${\rm EHTCP}(\widehat{\mathcal{A}},\widehat{{\bf d}},{q})$ with respect to $\widehat{\mathcal A}$ being a strong $\mathbb{EHP}$ tensor.
\begin{lemma}\label{EHP11}
Let $\widehat{\mathcal{A}} = (\mathcal{A}_{0}, \mathcal{A}_{1},...,\mathcal{A}_{k}) \in \Theta^{(k+1)}_{(m,n)}$ be a strong $\mathbb{EHP}$ tensor. Then the following statements are valid.
\begin{enumerate}
\item[\rm(i)] $m$ is even,
\item[\rm(ii)] $F_{i}(x) = \mathcal{A}_{i} {x}^{m-1}$ for each $0 \leq i \leq k $ is an injective function on $\mathbb{R}^{n},$
\item[\rm(iii)] $\deg(\widehat{\mathcal{A}}) \neq 0.$
\end{enumerate}
\end{lemma}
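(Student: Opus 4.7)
The plan is to dispatch the three parts in order, exploiting the one-variable ``test vector'' trick that is standard when working with strong versions of these complementarity properties, and then to piggyback on the degree computation already carried out in the proof of Theorem \ref{E imply nonempty}.

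For part (i), I would argue by contradiction: suppose $m$ is odd, so that $m-1$ is even and hence $\mathcal{A}_{0}(-x_{0})^{m-1}=\mathcal{A}_{0}x_{0}^{m-1}$ for every $x_{0}\in\mathbb{R}^{n}$. Pick any nonzero $x_{0}$ and set $\overline{x}_{0}=-x_{0}$, $x_{i}=\overline{x}_{i}=0$ for $1\le i\le k$. Then $\mathcal{A}_{0}x_{0}^{m-1}-\mathcal{A}_{0}\overline{x}_{0}^{m-1}=0=\sum_{j=1}^{k}(\mathcal{A}_{j}x_{j}^{m-1}-\mathcal{A}_{j}\overline{x}_{j}^{m-1})$, and each product $(x_{0}-\overline{x}_{0})\ast(x_{i}-\overline{x}_{i})=2x_{0}\ast 0=0\le 0$. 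The strong $\mathbb{EHP}$ definition then forces $x_{0}=\overline{x}_{0}=-x_{0}$, i.e. $x_{0}=0$, contradicting the choice of $x_{0}$. Hence $m$ must be even.

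For part (ii), it suffices to treat each index separately. Fix $i\in\{0,1,\dots,k\}$ and suppose $F_{i}(x_{i})=F_{i}(\overline{x}_{i})$. Set $x_{j}=\overline{x}_{j}=0$ for every $j\ne i$. Then the first hypothesis of the strong $\mathbb{EHP}$ definition reads $\mathcal{A}_{0}x_{0}^{m-1}-\mathcal{A}_{0}\overline{x}_{0}^{m-1}=\sum_{j=1}^{k}(\mathcal{A}_{j}x_{j}^{m-1}-\mathcal{A}_{j}\overline{x}_{j}^{m-1})$, which reduces to $0=0$ if $i\ne 0$ and to $F_{0}(x_{0})-F_{0}(\overline{x}_{0})=0$ if $i=0$, both trivially true. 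Similarly every product $(x_{0}-\overline{x}_{0})\ast(x_{\ell}-\overline{x}_{\ell})$ involves at least one zero factor, so the product hypothesis is vacuously satisfied. The strong $\mathbb{EHP}$ property therefore gives $x_{i}=\overline{x}_{i}$, which is the required injectivity of $F_{i}$.

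For part (iii), I would avoid redoing the homotopy argument and instead combine the previous work. By Proposition \ref{subset}(i), every strong $\mathbb{EHP}$ tensor is an $\mathbb{EHE}$ tensor, and by part (i) of this lemma, $m$ is even. Under exactly these two assumptions, the proof of Theorem \ref{E imply nonempty} constructs a homotopy between $\Psi_{\widehat{\mathcal{A}}}$ and the product map $(x_{1},\dots,x_{k},\mathcal{A}_{0}x_{0}^{m-1})$ whose zero set is $\{\mathbf{0}\}$ (this is where the $\mathbb{EHE}$ and evenness hypotheses are used), and then computes $\deg(\widehat{\mathcal{A}})$ via the Cartesian product property of degree as $\deg(\phi,U_{k+1},0)\prod_{i\in[k]}\deg(I,U_{i},0)$, where $\phi(x_{0})=\mathcal{A}_{0}x_{0}^{m-1}$. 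Since $\phi^{-1}(0)=\{0\}$ (an $\mathbb{EHE}$ consequence) and $m$ is even, \cite[Lemma 3.17]{Yadav30082024} gives $\deg(\phi,U_{k+1},0)\ne 0$, and the identity factors contribute $1$; hence $\deg(\widehat{\mathcal{A}})\ne 0$. The only substantive step is part (i); parts (ii) and (iii) reduce cleanly to the definition and to previously established machinery, so I expect the main obstacle to be purely bookkeeping in verifying that the test vectors chosen in (i) and (ii) do satisfy the (componentwise) hypotheses of the strong $\mathbb{EHP}$ property, rather than any new argument.
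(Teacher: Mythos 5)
Your proof is correct and follows essentially the same route as the paper: the paper obtains (i) and (ii) by observing that a strong $\mathbb{EHP}$ tensor is a strong $\mathbb{EHND}$ tensor (Proposition \ref{subset}(iii)) and invoking Lemma \ref{ND11}, whose proof uses exactly the test vectors you construct, and it proves (iii) precisely as you do, via the $\mathbb{EHE}$ inclusion together with the degree computation from Theorem \ref{E imply nonempty}. The only cosmetic difference is that you verify (i) and (ii) directly against the strong $\mathbb{EHP}$ definition instead of factoring through the strong $\mathbb{EHND}$ class, and you treat the injectivity of every $F_{i}$ explicitly rather than only $F_{0}$.
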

\begin{proof}
Let $\widehat{\mathcal{A}} = (\mathcal{A}_{0}, \mathcal{A}_{1},...,\mathcal{A}_{k}) \in \Theta^{(k+1)}_{(m,n)}$ be a strong $\mathbb{EHP}$ tensor. From Proposition \ref{subset} (iii), it follows that $\widehat{\mathcal{A}}$ is a strong $\mathbb{EHND}$ tensor. Therefore (i) and (ii) immediately follow from (i) and (ii) of Lemma \ref{ND11}, respectively.\\
(iii) Since $\widehat{\mathcal{A}}$ is a  strong $\mathbb{EHP}$ tensor,  $\widehat{\mathcal{A}}$ is an $\mathbb{EHE}$ tensor and $m$ is even, we get $\deg(\widehat{\mathcal{A}}) \neq 0$ from Theorem \ref{E imply nonempty}.  
\end{proof}

\begin{theorem}
Suppose that $\widehat{\mathcal{A}} = (\mathcal{A}_{0}, \mathcal{A}_{1},...,\mathcal{A}_{k}) \in \Theta^{(k+1)}_{(m,n)}$  is a strong $\mathbb{EHP}$ tensor. Then ${\rm EHTCP}(\widehat{\mathcal{A}},\widehat{{\bf d}},{q})$ has a unique solution   for each $\widehat{\bf d} \in \Theta^{(k-1)}_{n,++}$ and ${q} \in {\mathbb R}^{n}$. 
\end{theorem}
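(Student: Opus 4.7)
The plan is to break the claim into existence and uniqueness, both of which follow fairly directly from results already established in the paper. For existence, I first invoke Lemma \ref{EHP11}(i), which forces $m$ to be even whenever $\widehat{\mathcal{A}}$ is a strong $\mathbb{EHP}$ tensor. With $m$ even in hand, Corollary \ref{strong P imply nonempty} (equivalently, the combination $\deg(\widehat{\mathcal{A}})\neq 0$ from Lemma \ref{EHP11}(iii) with Theorem \ref{nonempty solution}, noting $\widehat{\mathcal{A}}\in\mathbb{EHR}_{0}$ by Proposition \ref{subset}(i)) guarantees that ${\rm SOL}(\widehat{\mathcal{A}},\widehat{\bf d},q)$ is nonempty.

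For uniqueness, suppose ${\bf x}=(x_{0},\ldots,x_{k})$ and $\overline{\bf x}=(\overline{x}_{0},\ldots,\overline{x}_{k})$ are two solutions. Applying Lemma \ref{solution} to both yields
\begin{equation*}
\mathcal{A}_{0}x_{0}^{m-1}-\mathcal{A}_{0}\overline{x}_{0}^{m-1}=\sum_{j=1}^{k}\bigl(\mathcal{A}_{j}x_{j}^{m-1}-\mathcal{A}_{j}\overline{x}_{j}^{m-1}\bigr),
\end{equation*}
together with $x_{0}\wedge x_{i}=0$ and $\overline{x}_{0}\wedge\overline{x}_{i}=0$ for every $i\in[k]$. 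To invoke the strong $\mathbb{EHP}$ condition, I must verify the sign hypothesis $(x_{0}-\overline{x}_{0})\ast(x_{i}-\overline{x}_{i})\leq 0$ for each $i\in[k]$, which is the main computational step.

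I would verify this componentwise. Fix $i\in[k]$ and an index $\ell\in[n]$. The complementarity gives four cases depending on whether $(x_{0})_{\ell}$ and $(\overline{x}_{0})_{\ell}$ are zero or positive. If both are zero, the product is $0$. If $(x_{0})_{\ell}=0$ and $(\overline{x}_{0})_{\ell}>0$, then $(\overline{x}_{i})_{\ell}=0$ and $(x_{i})_{\ell}\geq 0$, so the product equals $-(\overline{x}_{0})_{\ell}(x_{i})_{\ell}\leq 0$. The case $(x_{0})_{\ell}>0$ and $(\overline{x}_{0})_{\ell}=0$ is symmetric. If both are positive, then $(x_{i})_{\ell}=(\overline{x}_{i})_{\ell}=0$, again giving $0$. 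In every case the product is nonpositive, so $(x_{0}-\overline{x}_{0})\ast(x_{i}-\overline{x}_{i})\leq 0$ holds for all $i\in[k]$.

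With the displayed equation and the sign conditions in place, the defining property of a strong $\mathbb{EHP}$ tensor (Definition \ref{special tensors}(\ref{strong P})) yields $x_{i}=\overline{x}_{i}$ for all $0\leq i\leq k$, proving uniqueness. The only step requiring any care is the case analysis above; everything else is an invocation of results already proved. Combined with existence, this gives the theorem.
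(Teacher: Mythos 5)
Your proposal is correct and follows essentially the same route as the paper: existence via Corollary \ref{strong P imply nonempty} (with $m$ even guaranteed by Lemma \ref{EHP11}(i)), and uniqueness by applying Lemma \ref{solution} to two solutions and feeding the difference into the strong $\mathbb{EHP}$ definition. Your explicit four-case verification that $(x_{0}-\overline{x}_{0})\ast(x_{i}-\overline{x}_{i})\leq 0$ is a welcome elaboration of a step the paper passes over with ``equivalently,'' and it is carried out correctly.
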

\begin{proof}
Let $\widehat{\mathcal{A}} = (\mathcal{A}_{0}, \mathcal{A}_{1},...,\mathcal{A}_{k})$ be in $\Theta^{(k+1)}_{(m,n)}$ such that  $\widehat{\mathcal{A}}$ is a strong $\mathbb{EHP}$ tensor. The solvability of  ${\rm EHTCP}(\widehat{\mathcal{A}},\widehat{{\bf d}},{q})$  follows from Corollary \ref{strong P imply nonempty}.  
Let ${\bf x} = (x_{0},x_{1},...,x_{k}), \bar{\bf x} = (\bar{x}_{0},\bar{x}_{1},...,\bar{x}_{k})$  in $\Theta^{(k+1)}_{n}$ be  two solutions of the ${\rm EHTCP}(\widehat{\mathcal{A}},\widehat{{\bf d}},{q})$. Then from Lemma \ref{solution}, we have
\begin{equation*}
\begin{aligned}
\begin{cases}
\mathcal{A}_{0}x_{0}^{m-1} = {q} + \displaystyle {\sum_{j =1}^k} \mathcal{A}_{j}x_{j}^{m-1}\\x_{0} \wedge x_{j} = 0~\forall~j \in [k]
\end{cases} 
\text{and  } & 
\begin{cases}
\mathcal{A}_{0}\bar{x}_{0}^{m-1} = {q} + \displaystyle {\sum_{j =1}^k} \mathcal{A}_{j}\bar{x}_{j}^{m-1}\\\bar{x}_{0} \wedge \bar{x}_{j} = 0~\forall~j \in [k].
\end{cases}
\end{aligned}
\end{equation*}
Equivalently, we get
\begin{equation*}
\left.
\begin{aligned}
(\mathcal{A}_{0}x_{0}^{m-1}- \mathcal{A}_{0}\overline{x}_0^{m-1}) = {\sum_{j =1}^k (\mathcal{A}_{j}x_j^{m-1}}-\mathcal{A}_{j}\overline{x}_j^{m-1}),\\
(x_{0}- \overline{x}_{0}) \ast (x_{i}-\overline{x}_{i}) \leq {0}~\forall~i \in [k].
\end{aligned}
\right.
\end{equation*}
As $\widehat{\mathcal{A}}$ is strong $\mathbb{EHP}$, we see that ${\bf x} = \bar{\bf x}$. This implies that ${\rm EHTCP}(\widehat{\mathcal{A}},\widehat{{\bf d}},{q})$ has a unique solution   for each $\widehat{\bf d} \in \Theta^{(k-1)}_{n,++}$ and ${q} \in {\mathbb R}^{n}$.   
\end{proof}

 \section{Conclusions}\label{section 5}
 In this paper, we introduced the extended horizontal tensor complementarity problem (EHTCP) and investigated the properties of the solution set of the EHTCP. By defining new structured tensors, namely $\mathbb{EHR}_{0}$ tensor, $\mathbb{EHP}$ tensor, and $\mathbb{EHE}$ tensor, we obtained the nonemptiness and compactness of the solution set of the EHTCP using degree theory. Moreover, we proved that under the condition of a strong $\mathbb{EHP}$ tensor, the solution to the EHTCP is unique. Finally, we explored the finiteness of the solution set of EHTCP by introducing the concepts of $\mathbb{EHND}$ and strong $\mathbb{EHND}$ tensors, and established that for a strong $\mathbb{EHND}$ tensor, the EHTCP has a finite solution set.

\end{document}